\newtheorem{thm}{Theorem}[section]
\newtheorem{lem}[thm]{Lemma}
\newtheorem{cor}[thm]{Corollary}
\newtheorem{prop}[thm]{Proposition}
\newtheorem{dfn}[thm]{Definition}
\newtheorem{rem}[thm]{Remark}
\newtheorem{example}[thm]{Example}
\newtheorem{question}[thm]{Question}
\def\square{\vbox{
      \hrule height 0.4pt
      \hbox{\vrule width 0.4pt height 5.5pt \kern 5.5pt \vrule width 0.4pt}
      \hrule height 0.4pt}}
\def\id{\mathrm{id}}
\def\Ker{\mathrm{K er}}
\def\ch\mathrm{c h}
\long\def\symbolfootnote[#1]#2{\begingroup%
\def\thefootnote{\fnsymbol{footnote}}\footnote[#1]{#2}\endgroup}
\newcommand{\Z}{\mathbb{Z}}
\newcommand{\calZ}{\ensuremath{\mathcal{Z}}}
\newcommand{\calA}{\ensuremath{\mathcal{A}}}
\newcommand{\calG}{\ensuremath{\mathcal{G}}}
\newcommand{\Brun}{\mathrm{Brun}}
\newcommand{\LPBn}{\mathrm{L^P(\Brun_n)}}
\numberwithin{equation}{section}
\newcommand{\auths}[1]{\textrm{#1},}
\newcommand{\artTitle}[1]{\textsl{#1},}
\newcommand{\jTitle}[1]{\textrm{#1}}
\newcommand{\Vol}[1]{\textbf{#1}}
\newcommand{\Year}[1]{\textrm{(#1)}}
\newcommand{\Pages}[1]{\textrm{#1}}
\title{Brunnian Braids and Lie Algebras }
\author{J.Y. Li }
\address{Department of Mathematics and Physics, Shijiazhuang Tiedao University 050000, China}
\email{yanjinglee@163.com}
\author{V. V. Vershinin }
\address{D\'epartement des Sciences Math\'ematiques,
                               Universit\'e Montpellier II,
Place Eug\`ene Bataillon,
34095 Montpellier cedex 5, France} \email{ vershini@math.univ-montp2.fr}
\address{Sobolev Institute of Mathematics, Novosibirsk 630090,
Russia }
\email{ versh@math.nsc.ru}
\address{Laboratory of Quantum Topology, Chelyabinsk State University, Brat'ev
Kashirinykh street 129, Chelyabinsk 454001, Russia}
\author{J. Wu }
\address{Department of Mathematics, National University of Singapore, 2 Science Drive 2
Singapore 117542} \email{matwuj@nus.edu.sg}
\urladdr{www.math.nus.edu.sg/\~{}matwujie}
\subjclass[2000]{Primary 57M; Secondary 55, 20E99}
\keywords{Brunnian braid, Lie algebra
}
\begin{document}

\begin{abstract}
Brunnian braids have interesting relations with homotopy groups 
of spheres. In this work, we study the graded Lie algebra of the
descending central series related to Brunnian subgroup of the pure
braid group. A presentation of this Lie algebra is obtained.
\end{abstract}

\maketitle


\section{Introduction}
The pure braid group $P_n$ (of a disc) can be given by the following
presentation:

\noindent generators:  $a_{i,j}$, $1\leq i<j\leq n $,

\noindent the defining relations (\emph{Burau relations}
(\cite{Bu1}, \cite{Ve10})):
\begin{equation}
\begin{cases}
a_{i,j}a_{k,l}=a_{k,l}a_{i,j}
\ \text {for} \ i<j<k<l \ \text {and} \ i<k<l<j, \\
a_{i,j}a_{i,k}a_{j,k}=a_{i,k}a_{j,k}a_{i,j} \ \text {for} \
i<j<k, \\
a_{i,k}a_{j,k}a_{i,j}=a_{j,k}a_{i,j}a_{i,k} \ \text
{for} \ i<j<k, \\
a_{i,k}a_{j,k}a_{j,l}a_{j,k}^{-1}=a_{j,k}a_{j,l}a_{j,k}^{-1}a_{i,k}
\ \text {for} \ i<j<k<l.\\
\end{cases}
\label{eq:burau}
\end{equation}
 A geometric braid is called Brunnian if (1) it is a pure braid and (2)
   it becomes trival braid by removing any of its strands. Since the composition of any two Brunnian braids is still Brunnian,
   the set of Brunnian braids is a  subgroup of the braid group which is denoted by $\Brun_n$. By a direct geometric observation  $\Brun_n$ is the
normal subgroup of $P_n$, it is  generated by the iterated commutators
$$
[[[a_{1,2},a_{i_2,3}],a_{i_3,4}],\ldots,a_{i_{n-1},n}]
$$
for $1\leq i_t\leq t$ and $2\leq t\leq n-1$, where the commutator
$[a,b]$ is defined by $[a,b]=a^{-1}b^{-1}ab$ for $a,b\in G$  ~\cite{LW1}.

Brunnian braids have connections with homotopy theory as
described in ~\cite{BCWW}, ~\cite{LiWu}  and \cite{BMVW}.

We remind that for a group $G$ the descending central series
\begin{equation*}
G =\Gamma_1  \geq \Gamma_2 \geq \dots  \geq \Gamma_i \geq
\Gamma_{i+1} \geq \dots .
\end{equation*}
\noindent
is defined by the formulae
\begin{equation*}
\Gamma_1 = G, \ \ \Gamma_{i+1} =[\Gamma_{i}, G].
\end{equation*}
The descending central series of a discrete group $G$ gives rise to the
associated graded Lie algebra (over $\Z$) $L(G)$ 
\begin{equation*}
L_i(G)= \Gamma_i(G)/\Gamma_{i+1}(G).
\end{equation*}

The descending central series and the associated Lie algebras of the pure braid groups have been studied in particular in the works~\cite{CKX,GM-P,KaaV, K,Stanford}. It is also an ingredient in the study  of Vassiliev invariants of braids.
The associated graded algebra of  the Vassiliev filtration for pure braid group ring coincides with the associated algebra of the filtration by the powers of augmentation ideal of the group ring of pure braids. The latter by the Quillen's theorem~\cite{Quillen} is connected with the universal enveloping algebra of the associated Lie algebra of the descending central series of the pure braid group.

In this work, we consider the restriction $\{\Gamma_q(P_n)\cap\Brun_n\}$ of the descending central series of $P_n$ to $\Brun_n$. This gives a relative Lie algebra
\begin{equation}
\LPBn
=\bigoplus_{q=1}^\infty (\Gamma_q(P_n)\cap\Brun_n)/(\Gamma_{q+1}(P_n)\cap\Brun_n),
\label{LPBn}
\end{equation}
which is a two-sided Lie ideal of $L(P_n)$. The purpose of this article is to study the Lie algebra $\LPBn$.


We remark that the group $\Brun_n$ is a free group of infinite rank for $n\geq 4$ and so the associated Lie algebra $L(\Brun_n)$ is an infinitely generated free Lie algebra for $n\geq4$. The relative Lie algebra $\LPBn$ has better features, in particular  it is of finite type (in graded sense).

The main aim of the paper is to look at Brunnian braids at the level of Lie algebras.
Proposotions \ref{proposition1},  \ref{Proposition2} and \ref{proposition3}
as well as some subsequent statements
are the Lie algebra analogues of the corresponding facts for Brunnian groups.

\smallskip

\section{Lie algebra $\LPBn$}

A presentation of the Lie algebra $L(P_n)$ for the pure braid group
can be described as follows \cite{K}. It is the quotient of the free
Lie algebra $L[A_{i,j}| \, 1 \leq i < j \leq n]$ generated by
elements $a_{i,j}$ with $1 \leq i < j \leq n$ modulo the
``infinitesimal braid relations" or ``horizontal $4T$ relations"
given by the following three relations:

\begin{equation}
\begin{cases}
 [A_{i,j}, A_{s,t}] = 0, \  \text{if} \ \{i,j\} \cap \{s,t\} = \phi, \\
 [A_{i,j}, A_{i,k} + A_{j,k}] = 0, \  \text{if} \ i<j<k , \\
  [A_{i,k}, A_{i,j} + A_{j,k}] = 0, \ \text{if} \ i<j<k. \\
  \end{cases}
  \label{eq:kohno}
\end{equation}
Where $A_{i,j}$ is the projection of the $a_{i,j}$ to $L(P_n)$.

Let $G$ be a group with filtration $w$ (in the sense of Serre 
\cite[p.~7]{Serr}).
The fact that $\LPBn$ as defined in (\ref{LPBn}) 
 is a   Lie algebra  is a corollary of the following evident statement.
 \begin{prop}\label{proposition0}
For any subgroup $H$ of $G$ the restriction on $H$ of filtration $w$
defines a filtration on $H$.
 \hfill $\Box$
\end{prop}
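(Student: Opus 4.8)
The plan is to unwind Serre's definition of a filtration and observe that every condition in it is inherited by restriction to a subgroup. Recall that a filtration $w$ on $G$ in the sense of Serre is a function $w\colon G \to \R_{>0}\cup\{+\infty\}$ with $w(xy^{-1}) \geq \min\{w(x), w(y)\}$ and $w([x,y]) \geq w(x)+w(y)$ for all $x,y\in G$; equivalently it is a decreasing family of subgroups $G_\lambda = \{x\in G : w(x)\geq \lambda\}$ satisfying $[G_\lambda, G_\mu]\subseteq G_{\lambda+\mu}$. I would prove the proposition by checking that $w_H := w|_H$ satisfies these same two conditions.

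First I would note that, because $H$ is a subgroup, it is closed under multiplication, inversion and the formation of commutators, so for $x,y\in H$ the elements $xy^{-1}$ and $[x,y]$ again lie in $H$, and $w_H$ is evaluated on exactly the same arguments as $w$, with the same values. The two inequalities then hold verbatim as the specializations to $H$ of the inequalities for $w$:
\[
w_H(xy^{-1}) = w(xy^{-1}) \geq \min\{w(x), w(y)\} = \min\{w_H(x), w_H(y)\},
\]
\[
w_H([x,y]) = w([x,y]) \geq w(x)+w(y) = w_H(x)+w_H(y).
\]
Hence $w_H$ is a filtration on $H$.

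In the subgroup picture the same argument reads: set $H_\lambda := H\cap G_\lambda$. Each $H_\lambda$ is a subgroup of $H$ as an intersection of subgroups, the family $(H_\lambda)$ is decreasing because $(G_\lambda)$ is, and $[H_\lambda, H_\mu]\subseteq [G_\lambda, G_\mu]\cap H \subseteq G_{\lambda+\mu}\cap H = H_{\lambda+\mu}$. There is no genuine obstacle here --- this is why the statement is called evident --- and the only point one must not skip is that the closure of $H$ under commutators is exactly what keeps $[H_\lambda, H_\mu]$ inside $H$.
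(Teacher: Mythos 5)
Your proof is correct and coincides with what the paper intends: the paper states this proposition as evident and omits any argument, and your verification that Serre's two defining inequalities for $w$ restrict verbatim to $H$ (equivalently, that $H_\lambda = H\cap G_\lambda$ gives a decreasing family with $[H_\lambda,H_\mu]\subseteq H_{\lambda+\mu}$) is exactly that evident argument, spelled out. Nothing further is needed.
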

We define a filtration $w_B$ on $\Brun_n$ by the formula:
$$w_B(b)= \inf\{p \ | \ b\in \Gamma_p\}.
$$
\begin{prop}\label{proposition1}
$\LPBn$ is a Lie algebra defined by the filtration $w_B$, it is a two-sided Lie ideal in $L(P_n)$. 
\end{prop}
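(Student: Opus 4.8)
The plan is to derive both assertions from Proposition~\ref{proposition0} together with the standard passage from a filtered group to its associated graded Lie algebra, the normality of $\Brun_n$ in $P_n$ being the only extra ingredient needed for the ideal property. The filtration $w$ on $P_n$ has as its $p$-th subgroup exactly $\Gamma_p(P_n)$, and the subgroups cut out on $\Brun_n$ by $w_B$ are precisely $\Gamma_p(P_n)\cap\Brun_n$. By Proposition~\ref{proposition0} the restriction of $w$ to the subgroup $\Brun_n$ is again a filtration, and this restriction is $w_B$. Since the associated graded abelian groups of any filtration in the sense of Serre carry a Lie bracket induced by the group commutator, $\LPBn$ as in~(\ref{LPBn}) is a Lie algebra.

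Next I would construct the embedding $\LPBn\hookrightarrow L(P_n)$. For each $p$ the inclusion $\Gamma_p(P_n)\cap\Brun_n\hookrightarrow\Gamma_p(P_n)$ induces a homomorphism
\[
(\Gamma_p(P_n)\cap\Brun_n)/(\Gamma_{p+1}(P_n)\cap\Brun_n)\longrightarrow\Gamma_p(P_n)/\Gamma_{p+1}(P_n),
\]
whose kernel consists of the classes of elements of $\Gamma_p(P_n)\cap\Brun_n$ lying in $\Gamma_{p+1}(P_n)$, that is, in $\Gamma_{p+1}(P_n)\cap\Brun_n$. Hence each of these maps is injective, and assembling them over all $p$ yields a graded injection of $\LPBn$ into $L(P_n)$ which is compatible with the brackets, both being induced by the group commutator.

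Finally I would check the ideal property. Let $x\in\LPBn$ be homogeneous of degree $p$, represented by $a\in\Gamma_p(P_n)\cap\Brun_n$, and let $z\in L(P_n)$ be homogeneous of degree $q$, represented by $c\in\Gamma_q(P_n)$. The bracket $[x,z]$ in $L(P_n)$ is the class of the group commutator $[a,c]=a^{-1}c^{-1}ac=a^{-1}(c^{-1}ac)$. By the filtration property $[a,c]\in\Gamma_{p+q}(P_n)$, while the normality of $\Brun_n$ in $P_n$ gives $c^{-1}ac\in\Brun_n$, so that $[a,c]$, being a product of two elements of $\Brun_n$, lies in $\Brun_n$. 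Thus $[a,c]\in\Gamma_{p+q}(P_n)\cap\Brun_n$ and $[x,z]\in\LPBn$; by antisymmetry of the bracket the same conclusion holds with the factors reversed, so $\LPBn$ is a two-sided Lie ideal of $L(P_n)$.

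The step requiring the most care is the well-definedness of the bracket on cosets: that replacing the representative $a$ by $aa'$ with $a'\in\Gamma_{p+1}(P_n)\cap\Brun_n$ changes $[a,c]$ only by an element of $\Gamma_{p+q+1}(P_n)\cap\Brun_n$. This is the standard commutator-calculus verification for filtered groups and uses only the defining inclusions $[\Gamma_i,\Gamma_j]\subseteq\Gamma_{i+j}$ together with the normality of $\Brun_n$; once it is in place, all three assertions above are immediate.
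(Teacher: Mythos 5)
Your proof is correct and takes essentially the same approach as the paper: the Lie algebra structure comes from Proposition~\ref{proposition0} together with Serre's passage from a filtered group to its associated graded Lie algebra, and the two-sided ideal property comes from the normality of $\Brun_n$ in $P_n$. You have merely written out the embedding of $\LPBn$ into $L(P_n)$ and the commutator computation $[a,c]=a^{-1}(c^{-1}ac)\in\Gamma_{p+q}(P_n)\cap\Brun_n$ that the paper's one-line proof leaves implicit.
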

\begin{proof}
The last statement follows from the fact that $\Brun_n$ is a normal subgroup of $P_n$.
\end{proof}
We call $\LPBn$ {\it relative Lie
algebra associated with Brunnian subgroup} of the pure braid group.

The removing-strand operation on braids induces an operation
$$
d_k\colon L(P_n)\longrightarrow L(P_{n-1})
$$
formulated by
\begin{equation}
d_k(A_{i,j})=\left\{
\begin{array}{lcl}
A_{i,j}&\textrm{ if } & i<j<k\\
0&\textrm{ if }& k=j\\
A_{i,j-1}&\textrm{ if }& i<k<j\\
0&\textrm{ if }& k=i\\
A_{i-1,j-1}&\textrm{ if }& k<i<j.\\
\end{array}\right.
\label{face-dk}
\end{equation}

A sequence of sets $S=\{S_n\}_{n\geq 0}$ is called a bi-$\Delta$-set
if there are faces $d_j:S_n\rightarrow S_{n-1}$ and co-faces
$d^j:S_{n-1}\rightarrow S_n $ for $0\leq j\leq n$ such that the
following identies hold:

\begin{enumerate}
\item $d_jd_i=d_id_{j+1}$ for $j\geq i$;
\item $d^jd^i=d^{i+1}d^j$ for $j\leq i$;
\item $d_jd^i=\left\{
\begin{array}{lcl}
d^{i-1}d_j&\textrm{ if }& j<i,\\
\id&\textrm{ if }& j=i,\\
d^id_{j-1}&\textrm{ if }& j>i.\\
\end{array}
\right.$
\end{enumerate}
In other words, $S$ is a bi-$\Delta$- and co-$\Delta$-set such that
relation (1)-(3) holds. Moreover a sequence of groups $\mathcal {G}$
is called a bi-$\Delta$-group if $\mathcal {G}$ is a bi-$\Delta$-set
such that all faces and co-faces are group homomorphism.

Let $\mathbb{P}_n=P_{n+1}.$ According to~\cite[Example 1.2.8]{Wu4},
the sequence of groups $\mathbb{P}=\{\mathbb{P}_{n}\}_{n\geq 0}$
with faces relabeled as $\{\mathbbm{d}_0,\mathbbm{d}_1,\ldots\}$ and
co-faces relabeled as $\{\mathbbm{d}^0,\mathbbm{d}^1,\ldots\}$ forms
a bi-$\Delta$-group structure. Where the face operation
$\mathbbm{d}_i:\mathbb{P}_n\rightarrow
\mathbb{P}_{n-1}=d_{i+1}:P_{n+1}\rightarrow P_n$ is obtained by
deleting the $i+1$st string,  the co-face operation
$\mathbbm{d}^i:\mathbb{P}_n\rightarrow \mathbb{P}_{n+1}$ is obtained
by adding a trivial $i+1$st string in front of the other strings
$(i=0,1,2,\cdots, n)$.

%
%

\begin{prop}\label{Proposition2}
 The relative Lie algebra $\LPBn$ is the
 Lie  subalgebra  $\bigcap_{i=1}^n\ker(d_i:L(P_n)\rightarrow
L(P_{n-1}))$.
\end{prop}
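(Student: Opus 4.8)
The plan is to pass from the group level to the associated graded level and reduce the statement to a degreewise equality inside $L(P_n)$. The removing-strand operations are genuine group homomorphisms $\partial_i\colon P_n\to P_{n-1}$, so each preserves the lower central series and induces a map on associated graded Lie algebras, namely the $d_i$ of (\ref{face-dk}); being induced by homomorphisms, the $d_i$ are Lie algebra maps, so each $\ker d_i$ is an ideal and $\bigcap_{i=1}^n\ker d_i$ is a graded Lie subalgebra of $L(P_n)$. At the group level the very definition of Brunnian gives $\Brun_n=\bigcap_{i=1}^n\ker(\partial_i)$. I would first record that the natural map $\LPBn\to L(P_n)$ sending the class of $x\in\Gamma_q(P_n)\cap\Brun_n$ to $x\Gamma_{q+1}(P_n)$ is injective in each degree: if $x\in\Gamma_{q+1}(P_n)$ then, being also Brunnian, $x\in\Gamma_{q+1}(P_n)\cap\Brun_n$, so its class already vanishes. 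Thus $\LPBn$ sits inside $L(P_n)$ as the graded subgroup whose degree-$q$ part is $\{x\Gamma_{q+1}(P_n)\mid x\in\Gamma_q(P_n)\cap\Brun_n\}$, and since both $\LPBn$ and $\bigcap_i\ker d_i$ are graded Lie subalgebras it suffices to match their underlying graded pieces in every degree $q$.

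One inclusion is immediate. If $[x]$ is represented by a Brunnian $x\in\Gamma_q(P_n)\cap\Brun_n$, then $d_i([x])=[\partial_i(x)]=[1]=0$ for all $i$, so $[x]\in\bigcap_i\ker d_i$. The reverse inclusion is the whole content: a class in $\bigcap_i\ker d_i$ is represented by some $x\in\Gamma_q(P_n)$ with $\partial_i(x)\in\Gamma_{q+1}(P_{n-1})$ for every $i$, and I must produce $x'\equiv x\pmod{\Gamma_{q+1}(P_n)}$ that is \emph{genuinely} Brunnian, i.e. $\partial_i(x')=1$ for all $i$. This is where the bi-$\Delta$-group structure of Section~\ref{} enters: each face $\partial_i$ admits the co-face $d^i$ as a section, $\partial_i d^i=\id$, and $d^i$ is again a homomorphism, hence preserves the lower central series.

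The correction is carried out one face at a time. Starting from $x^{(0)}=x$, at step $k+1$ I set $z=d^{k+1}\bigl(\partial_{k+1}(x^{(k)})\bigr)$ and replace $x^{(k)}$ by $x^{(k+1)}=x^{(k)}z^{-1}$. Because $\partial_{k+1}(x^{(k)})\in\Gamma_{q+1}(P_{n-1})$ and $d^{k+1}$ preserves the lower central series, $z\in\Gamma_{q+1}(P_n)$, so $x^{(k+1)}\equiv x^{(k)}\pmod{\Gamma_{q+1}(P_n)}$; and $\partial_{k+1}(x^{(k+1)})=\partial_{k+1}(x^{(k)})\,\partial_{k+1}(z)^{-1}=1$ since $\partial_{k+1}d^{k+1}=\id$. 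The delicate point, which I expect to be the main obstacle, is that killing one face could disturb the faces already made trivial; this is exactly what the simplicial identities resolve. For $j\le k$ one computes $\partial_j(z)=\partial_j d^{k+1}\partial_{k+1}(x^{(k)})=d^{k}\partial_j\partial_{k+1}(x^{(k)})=d^{k}\partial_k\partial_j(x^{(k)})$ using the identities $\partial_j d^{k+1}=d^{k}\partial_j$ and $\partial_j\partial_{k+1}=\partial_k\partial_j$ for $j\le k$; since $\partial_j(x^{(k)})=1$ by the inductive hypothesis, $\partial_j(z)=1$ and the earlier faces survive untouched, while the faces with index $>k+1$ remain in $\Gamma_{q+1}(P_{n-1})$. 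After $n$ steps $x'=x^{(n)}$ satisfies $x'\equiv x\pmod{\Gamma_{q+1}(P_n)}$ and $\partial_i(x')=1$ for all $i$, so $x'\in\Gamma_q(P_n)\cap\Brun_n$ represents $[x]$. This establishes the reverse inclusion in every degree and hence the asserted equality $\LPBn=\bigcap_{i=1}^n\ker(d_i)$.
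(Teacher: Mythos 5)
Your proof is correct, but it takes a genuinely different route from the paper's: the paper's entire proof is a one-line appeal to the general theory of bi-$\Delta$-groups, citing \cite[Proposition 1.2.10]{Wu4}, whereas you give a self-contained argument from first principles. Your reduction is sound (the embedding $\LPBn\hookrightarrow L(P_n)$ is injective degreewise for exactly the reason you state, and the inclusion $\LPBn\subseteq\bigcap_i\ker d_i$ is immediate), and the correction scheme for the reverse inclusion is the real content: killing the faces one at a time by multiplying by $z=d^{k+1}\bigl(\partial_{k+1}(x^{(k)})\bigr)^{-1}$, with the mixed identities $\partial_j d^{k+1}=d^{k}\partial_j$ and $\partial_j\partial_{k+1}=\partial_k\partial_j$ for $j\le k$ guaranteeing that the faces already trivialized stay trivial, and with preservation of the lower central series by $d^{k+1}$ guaranteeing that the representative only changes modulo $\Gamma_{q+1}(P_n)$. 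I checked these identities against the strand-deletion and strand-insertion conventions and they hold; the induction bookkeeping (faces $\le k$ trivial, faces $>k$ still in $\Gamma_{q+1}$) is also maintained correctly. In effect you have inlined the proof that the paper outsources: the coface-section mechanism you use is precisely what makes Wu's decomposition theorem for bi-$\Delta$-groups work. What the citation buys the paper is brevity and coherence with Section 4, where the same bi-$\Delta$ machinery (the Decomposition Theorem and its corollaries) is reused to compute ranks; what your argument buys is transparency, since it isolates the only properties actually needed, namely that the $\partial_i$ are homomorphisms admitting homomorphic sections compatible with the simplicial identities and with the lower central series.
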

\begin{proof}
The assertion follows from \cite[Proposition 1.2.10]{Wu4}.
\end{proof}

Our next step is to determine a set of generators for the Lie algebra $\LPBn$. The following fact is a Lie algebra analogue of the theorem proved by A.~A.~Markov \cite{Mar2} for the pure braid group.
Also it follows from Theorem~3.1  in  \cite{fr} or
Lem\-ma~3.1.1 in \cite{Ih}.
\begin{prop}\label{proposition3}
The kernel of the homomorphism $d_n:L(P_n) \to L(P_{n-1})$ is a free
Lie algebra, generated by the free generators
 $A_{i,n}$, for $1\le i\le n-1$.
\begin{equation*}
 \Ker (d_n:L(P_n)\to L(P_{n-1}))= L[A_{1,n} , \ldots, A_{n-1,n}].
\end{equation*}
\hfill
$\Box$
\end{prop}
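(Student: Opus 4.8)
The plan is to realize $d_n$ as a split surjection of graded Lie algebras, to identify its kernel explicitly from the presentation, and only then to prove freeness by a Poincar\'e--series count. Reading \eqref{face-dk} at $k=n$ gives $d_n(A_{i,j})=A_{i,j}$ for $j<n$ and $d_n(A_{i,n})=0$, so $d_n$ is onto. The assignment $A_{i,j}\mapsto A_{i,j}$ for $j<n$ defines a section $s\colon L(P_{n-1})\to L(P_n)$: it is well defined because the relations \eqref{eq:kohno} among the $A_{i,j}$ with $j<n$ are precisely the defining relations of $L(P_{n-1})$, and $d_n\circ s=\id$. Hence, writing $\mathfrak{k}=\Ker d_n$ and $\mathfrak{s}=s(L(P_{n-1}))$, there is a vector-space splitting $L(P_n)=\mathfrak{k}\oplus\mathfrak{s}$.

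Next I would let $\mathfrak{g}$ be the Lie subalgebra generated by $A_{1,n},\dots,A_{n-1,n}$; since $d_n$ kills these, $\mathfrak{g}\subseteq\mathfrak{k}$. The crucial computation, carried out entirely inside \eqref{eq:kohno}, is that $[A_{i,n},A_{s,t}]\in\mathfrak{g}$ for every generator $A_{s,t}$ of $L(P_n)$: for $t<n$ the disjoint-support relation forces the bracket to vanish unless $i\in\{s,t\}$, and in the two surviving cases the relations $[A_{s,t},A_{s,n}+A_{t,n}]=0$ and $[A_{s,n},A_{s,t}+A_{t,n}]=0$ rewrite it as $\pm[A_{s,n},A_{t,n}]\in\mathfrak{g}$, while for $t=n$ it is already a bracket of generators of $\mathfrak{g}$. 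A standard double induction on the Jacobi identity then promotes this from the generators of $\mathfrak{g}$ and of $L(P_n)$ to the conclusion $[\mathfrak{g},L(P_n)]\subseteq\mathfrak{g}$, i.e.\ $\mathfrak{g}$ is a Lie ideal. Therefore $\mathfrak{g}+\mathfrak{s}$ is a subalgebra containing every $A_{i,j}$ and so equals $L(P_n)$; comparing this with $L(P_n)=\mathfrak{k}\oplus\mathfrak{s}$ and using $\mathfrak{g}\subseteq\mathfrak{k}$ forces $\mathfrak{k}=\mathfrak{g}$. This shows $A_{1,n},\dots,A_{n-1,n}$ generate $\Ker d_n$.

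It remains to prove freeness, i.e.\ that the surjection $\phi\colon L[X_1,\dots,X_{n-1}]\twoheadrightarrow\mathfrak{k}$, $X_i\mapsto A_{i,n}$, is injective. Applying the universal enveloping functor to the split sequence $0\to\mathfrak{k}\to L(P_n)\to L(P_{n-1})\to 0$ and invoking PBW yields an isomorphism $U(L(P_n))\cong U(\mathfrak{k})\otimes U(L(P_{n-1}))$ of graded $\Q$-vector spaces, so the Poincar\'e series of $U(\mathfrak{k})$ is the quotient of those of $U(L(P_n))$ and $U(L(P_{n-1}))$. By Kohno's computation \cite{K} these series are $\prod_{j=1}^{n-1}(1-jt)^{-1}$ and $\prod_{j=1}^{n-2}(1-jt)^{-1}$, whose quotient is $(1-(n-1)t)^{-1}$---exactly the Poincar\'e series of the tensor algebra $U(L[X_1,\dots,X_{n-1}])$. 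Since $U(\phi)$ is a degreewise-surjective map between graded spaces whose Poincar\'e series agree, it is an isomorphism; by PBW this forces $\dim\mathfrak{k}_d=\dim L[X_1,\dots,X_{n-1}]_d$ for all $d$, whence the surjection $\phi$ is itself an isomorphism. Over $\Z$ one passes to ranks after tensoring with $\Q$, the integral statement following from the fact that the lower central quotients of $P_n$ are free abelian.

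I expect freeness to be the genuine difficulty: generation of $\Ker d_n$ by the $A_{i,n}$ is forced by the relations, and indeed \eqref{eq:kohno} contains no identity purely among the $A_{\cdot,n}$, but ruling out \emph{induced} relations among them requires the external Hilbert-series input above. A structural alternative avoids the count: the Fadell--Neuwirth splitting exhibits $P_n$ as an almost-direct product $F_{n-1}\rtimes P_{n-1}$ with $F_{n-1}$ free on $a_{1,n},\dots,a_{n-1,n}$, and the Falk--Randell theorem identifies the associated graded Lie algebra of $F_{n-1}$---which is $\Ker d_n$---with the free Lie algebra on the $A_{i,n}$; this is the route underlying the cited \cite{fr} and \cite{Ih}.
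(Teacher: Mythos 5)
Your proof is correct, but it takes a genuinely different route from the paper's: the paper does not prove Proposition \ref{proposition3} at all --- it states it with a box, describing it as the Lie algebra analogue of Markov's theorem \cite{Mar2} and justifying it by citation to Theorem~3.1 of \cite{fr} or Lemma~3.1.1 of \cite{Ih}. The argument underlying those citations is precisely the structural alternative you sketch in your final paragraph: the Fadell--Neuwirth splitting realizes $P_n$ as an almost-direct product $F_{n-1}\rtimes P_{n-1}$, and the Falk--Randell theorem then identifies $\Ker(d_n)$ with the associated graded Lie algebra of $F_{n-1}$, the free Lie algebra on the $A_{i,n}$. Your argument instead works wholly inside the presentation \eqref{eq:kohno}: the section $s$, together with your verification that the subalgebra $\mathfrak{g}$ generated by $A_{1,n},\ldots,A_{n-1,n}$ is an ideal (the relation manipulations check out: $[A_{s,n},A_{s,t}]=-[A_{s,n},A_{t,n}]$ and $[A_{t,n},A_{s,t}]=[A_{s,n},A_{t,n}]$ for $s<t<n$), gives $\Ker(d_n)=\mathfrak{g}$ by an elementary splitting argument; freeness then comes from PBW and Kohno's Poincar\'e--Koszul series from \cite{K}. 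The trade-off: the cited route is shorter, yields the integral statement at once, and is the historically standard argument; your route has the virtue of never leaving the presented Lie algebra --- the generation step in particular is a clean self-contained computation --- but its freeness step imports Kohno's Hilbert series as external input, and the passage from $\Q$ to $\Z$ leans on torsion-freeness of the lower central quotients of $P_n$, a fact whose standard proof is the very Falk--Randell splitting, so over the integers your argument is correct but not logically independent of the one the paper cites.
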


For a set $Z$, let $L[Z]$ denote
the free Lie algebra freely generated by $Z$.
Let $X$ and $Y$ be nonempty (possibly infinite) sets with $X\cap Y=\emptyset$,
$X\cup Y=Z$. We are interested to study the kernel of Lie homomorphism
$$
\pi\colon L[Z]\longrightarrow L[Y]
$$
$\pi$
such that
$\pi(x)=0$ for $x\in X$ and $\pi(y)=y$ for $y\in Y$.
The following lemma
is not new: for the case of Lie algebras over a field and when $X$ consists of one element this is Lemma~2.6.2 in \cite{BK}. For completeness of our exposition we are giving our proof here.
\begin{lem}\label{lemma4}
The kernel of $\pi$ is a free Lie algebra, generated by the
following family of free generators:
\begin{equation}\label{equation-lemma4}
x,  [\cdots [x, y_1], \ldots, y_t]
\end{equation}
for $x\in X, y_i\in Y$ for $1\leq i\leq t$.
\end{lem}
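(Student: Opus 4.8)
The plan is to prove the two halves separately: first that the listed family $S=\{x,\ [\cdots[x,y_1],\ldots,y_t]\}$ generates $\ker\pi$ as a Lie algebra, and then that it does so \emph{freely}. I work over $\Z$ (indeed over any commutative ring), so I will avoid Shirshov--Witt/field-specific tools and argue directly, which is one virtue of the approach below. First observe that $\pi$ is exactly the quotient of $L[Z]$ by the Lie ideal $J$ generated by $X$: by the universal property of the free Lie algebra, a homomorphism out of $L[Z]$ killing $X$ is the same as a homomorphism out of $L[Y]$, so $L[Z]/J\cong L[Y]$ via $\pi$ and hence $\ker\pi=J$. Also note that each element of $S$ visibly lies in $\ker\pi$, since applying $\pi$ replaces the innermost $x$ by $0$.

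For the generation step, let $\mathfrak h\subseteq L[Z]$ be the Lie subalgebra generated by $S$. I will show $\mathfrak h$ is an ideal containing $X$; since $J$ is the smallest ideal containing $X$ we get $J\subseteq\mathfrak h$, while $\mathfrak h\subseteq\ker\pi=J$, whence $\mathfrak h=\ker\pi$. To see $\mathfrak h$ is an ideal it suffices to check $[\mathfrak h,z]\subseteq\mathfrak h$ for each generator $z\in Z$: if $z=y\in Y$ then bracketing a generator $[\cdots[x,y_1],\ldots,y_t]$ of $S$ with $y$ merely appends $y$ and lands back in $S$, while if $z=x'\in X$ then $x'\in S\subseteq\mathfrak h$ and the bracket stays in the subalgebra $\mathfrak h$. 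Since $\{w\in L[Z] : [\mathfrak h,w]\subseteq\mathfrak h\}$ is a subalgebra (by Jacobi) containing $Z$, it is all of $L[Z]$, so $\mathfrak h$ is an ideal.

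The freeness step is the main obstacle, and the device I propose is to construct an explicit left inverse to the canonical surjection $\phi\colon L[S]\twoheadrightarrow\ker\pi$ (sending the free generator indexed by $(x;y_1,\ldots,y_t)$ to $[\cdots[x,y_1],\ldots,y_t]$). I build a target Lie algebra as a semidirect product $\mathfrak m=L[S]\rtimes_\rho L[Y]$. For each $y\in Y$ let $D_y$ be the derivation of $L[S]$ defined on generators by appending $y$, i.e. $s_{x;y_1,\ldots,y_t}\mapsto s_{x;y_1,\ldots,y_t,y}$; since $L[Y]$ is free, $y\mapsto -D_y$ extends uniquely to a Lie homomorphism $\rho\colon L[Y]\to\mathrm{Der}(L[S])$, which defines $\mathfrak m$. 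Now let $\Psi\colon L[Z]\to\mathfrak m$ be the homomorphism with $\Psi(x)=s_{x}$ and $\Psi(y)=y$. A short induction using the semidirect-product bracket $[a,y]_{\mathfrak m}=-\rho(y)a=D_y a$ (for $a\in L[S]$, $y\in Y$) shows $\Psi\big([\cdots[x,y_1],\ldots,y_t]\big)=s_{x;y_1,\ldots,y_t}$. Hence $\Psi\circ\phi$ is the identity on the generators of $L[S]$, so $\Psi\circ\phi=\id$ and $\phi$ is injective; together with surjectivity from the previous step, $\phi$ is an isomorphism and $\ker\pi$ is free on $S$.

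The point requiring care — and where the argument could go wrong — is the bookkeeping in this last step: one must verify that each $D_y$ is a well-defined derivation and that the sign convention in the semidirect product is chosen so that bracketing by $y$ inside $\mathfrak m$ matches \emph{appending} $y$ rather than its negative. Reversing this sign is the easy mistake; it is to fix it that I set $\rho(y)=-D_y$. Once these compatibilities are in place, the construction of $\mathfrak m$ and $\Psi$ furnishes exactly a one-sided inverse, and no separate linear-independence computation is needed.
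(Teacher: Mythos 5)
Your proof is correct, but it takes a genuinely different route from the paper's in both halves. For the generation step, the paper argues by induction on the length of Lie monomials in the ideal generated by $X$, rewriting $[[A_1,A_2],B]$ by the Jacobi identity; you instead show that the subalgebra $\mathfrak{h}$ generated by the family $S$ is an ideal containing $X$ and sandwich $\ker\pi = J \subseteq \mathfrak{h} \subseteq \ker\pi$, which is slicker. (One small point to make explicit there: checking $[\mathfrak{h},y]\subseteq\mathfrak{h}$ requires passing from ``$[s,y]\in S$ for $s\in S$'' to all of $\mathfrak{h}$, which follows in one line because $\mathrm{ad}\,y$ is a derivation, so the set of $a\in\mathfrak{h}$ with $[a,y]\in\mathfrak{h}$ is a subalgebra containing $S$.) For the freeness step, the paper also introduces the free Lie algebra $F$ on formal symbols $C(x,y_1,\ldots,y_t)$, but it makes all of $L[X\sqcup Y]$ act on $F$ by derivations (with $x'\in X$ acting as the inner derivation by $C(x')$), proves by induction that each $B(x,y_1,\ldots,y_t)$ acts as the inner derivation by $C(x,y_1,\ldots,y_t)$, and then deduces injectivity of $\phi$ from injectivity of the inner-derivation map $\chi\colon F\to D(F)$, which requires citing that free Lie algebras have trivial center. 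Your semidirect product $L[S]\rtimes_{\rho}L[Y]$ repackages the same derivations $D_y$ (``append $y$''), but produces an honest retraction: $\Psi\circ\phi$ equals the canonical, manifestly injective, inclusion $L[S]\hookrightarrow L[S]\rtimes_{\rho}L[Y]$. This buys you freeness with no appeal to center-triviality of free Lie algebras, and it works verbatim over $\Z$ (indeed over any commutative ring); it is essentially Lazard elimination in Bourbaki's form. Your sign bookkeeping is also right: with $\rho(y)=-D_y$ and the convention $[(a,0),(0,y)]=(-\rho(y)a,\,0)$, one gets $[a,y]_{\mathfrak{m}}=D_y a$, matching right-bracketing in $L[Z]$, so the inductive computation of $\Psi$ on the generators goes through as claimed.
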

\begin{proof}
Observe that the kernel $\Ker(\pi)$ is the two-sided ideal generated by the elements $x\in X$. Let us prove that it is generated as a Lie algebra by the elements (\ref{equation-lemma4}). We prove this by induction on the length of monomials $M$, sums of which give the ideal. For the length 1 and 2 one can see this directly. Let the length of $M$ be at least 3: $M=[A,B]$ such that $A$ contains some $x\in X$. We may assume that
the length of $A$ is at least 2. If not, then $A=x$ for some $x\in X$ and $B=[B_1,B_2]$ and $M= [x, [B_1, B_2]]=
[[B_2,x], B_1] + [[x, B_1], B_2]$ and it is reduced to the case when in $M$ element
$A$ contains some $x\in X$ with its length at least 2. Since $A\in \Ker(\pi)$ with its length strictly less than that of $M$, it is a linear combination of the products $[A_1,A_2]$ of the generators (\ref{equation-lemma4})  with both $A_1$ and $A_2$ containing some (possibly different) element(s) in $X$ by induction. Consider the equality
 $$ [[A_1, A_2], B]=
-[[A_2,B], A_1] - [[B, A_1], A_2].$$
The elements $[A_2,B]$ and $[B, A_1]$ have length strictly less than that of $M$
and by induction they are given by linear combinations of  products of the generators (\ref{equation-lemma4}). Thus $M=[A,B]$ is a linear combination of products of the generators (\ref{equation-lemma4}).

Let us prove now that the elements (\ref{equation-lemma4}) freely generate
our ideal $\Ker(\pi)$. Let us define a free Lie algebra (over $\Z$)
that is freely generated by  formal elements \{$C(x)$,
$C(x,y_1,y_2,\ldots,y_t)$\}, $x\in X$, $y_1,\ldots,y_t\in Y$ with
$t\geq 1$, which are in one-to-one correspondence with the elements
(\ref{equation-lemma4})
$$F=L[C(x), \ C(x,y_1,y_2,\ldots,y_t) \ |  x\in X,\  y_1,\ldots,y_t\in Y,\ t\geq 1].$$
Let us define an action of the free Lie algebra $L[X\sqcup Y]$ on
$F$ by the formulae which mimic the action of $L[X\sqcup Y]$ on the
elements (\ref{equation-lemma4}):
 \begin{equation}
\begin{cases}
  [C(x,y_1,y_2,\ldots,y_t), y] = C(x,y_1,y_2,\ldots,y_t,y),  y\in Y\\
 [C(x,y_1,y_2,\ldots,y_t), x']=[C(x,y_1,y_2,\ldots,y_t), C(x')], x'\in X,
    \end{cases}\label{action-lemma4}
    \end{equation}
where, for $t=0$, $C(x,y_1,y_2,\ldots,y_t)=C(x)$.
Let us denote the generators (\ref{equation-lemma4}) of our ideal by $B(x)$, $B(x,y_1,y_2,\ldots,y_t)$, $x\in X$, $y_1,\ldots,y_t\in Y$ with $t\geq 1$.  We claim that the element $B(x,y_1,y_2,\ldots,y_t)$ acts on $F$ the same way as the inner derivation
by $C(x,y_1,y_2,\ldots,y_t)$:
\begin{equation*}\begin{cases}
[C(x',y'_1,\ldots,y'_{t'}), B(x,y_1,\ldots,y_t)]=[C(x',y'_1,\ldots,y'_{t'}), C(x,y_1,\ldots,y_t)].\\
\end{cases}
\end{equation*}

The proof is by induction on the length $t$ of $B(x,y_1,\ldots,y_t)$.  For the length $t=0$ it follows from the
 definition of the action. Let it be proved for  the lengths less than $t$ with $t>0$. Let $D=C(x',y'_1,\ldots,y'_{t'})$, $C=C(x,y_1,\ldots,y_t)$, $B'=B(x,y_1,\ldots,y_{t-1})$ and $C'=C(x,y_1,\ldots,y_{t-1})$.
$$
\begin{array}{rcl}
 [D,B(x,y_1,\ldots,y_t)]&=&[D, [B', y_t] ]\\
&=& [[D, B'], y_t]  - [[D,  y_t],B']\\
& = & [[D, C'], y_t]  - [C(x',y'_1,\ldots,y'_{t'}, y_t),B']\\
&&\quad \textrm{   ( by induction) } \\
&=&[[D, C'], y_t]  -  [C(x',y'_1,\ldots,y'_{t'}, y_t),C']\\
&&\quad \textrm{  (by induction)}\\
&=&  [[D, C'], y_t]  -
   [[D,  y_t],C']\\
&=&[D, [C', y_t] ]\\
&=&[D,C].\\
\end{array}
$$
The induction is finished.

  Let $D(F)$ be the Lie algebra of all derivations of the algebra $F$, homomorphism
  $\chi: F\to D(F)$ is defined by inner derivations.
 We define homomorphism $\phi: F\to  L[X\sqcup Y]$  by the formulae
  $$\phi(C(x))=B(x)=x\quad \textrm{ and}$$
$$ \phi(C(x,y_1,\ldots,y_t))= B(x,y_1,\ldots,y_t)=[\cdots[x,y_1],\ldots,y_t]$$
   and homomorphism  $\delta: L[X\sqcup Y]
  \to D(F)$ is defined by the action (\ref{action-lemma4}). There is a commutative diagram:
  \[\xymatrix{ F  \ar[rr]^{\phi} \ar[dr]_{\chi}
&& {L[X\sqcup Y]} \ar[dl]^{\delta} \\  & D(F). }  \]
The
homomorphism $\chi$ is a monomorphism as free Lie algebras with more
than 2 generators have trivial center \cite[Exercice 3), \S 3,
p.~79]{Bo}. So $\phi$ is also a monomorphism and hence  it is an
isomorphism on the ideal, generated by $B(x,y_1,\ldots,y_t)$.

\end{proof}

\begin{prop}\label{proposition5}
The intersection of the kernels of the homomorphisms $d_n$ and $d_{k}$, $k\not=n$,
is a free Lie algebra, generated by the following infinite family of free generators:
\begin{equation}
A_{k,n},  [\cdots [A_{k,n}, A_{j_1,n}], \ldots, A_{j_m, n}]
\label{genK}
\end{equation}
for
$ j_i\not= k, n; \ j_i\le n-1$; $i\le m$; $m\ge 1$:
\begin{multline}
 \Ker (d_n) \cap \Ker (d_{k}) = \\
 L[A_{k,n}, \,  [\cdots [A_{k,n}, A_{j_1,n}], \ldots, A_{j_m, n} ] \ | \
j_i\not= k, n; \ j_i\le n-1, i\le m; \ m\ge 1].
\label{obrazu}
\end{multline}
\end{prop}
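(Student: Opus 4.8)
The plan is to realize $\Ker(d_n)\cap\Ker(d_k)$ as the kernel of a single homomorphism between free Lie algebras, so that Lemma~\ref{lemma4} applies directly. By Proposition~\ref{proposition3} the subalgebra $\Ker(d_n)$ is the free Lie algebra $L[A_{1,n},\dots,A_{n-1,n}]$, and $\Ker(d_n)\cap\Ker(d_k)$ is precisely the kernel of the restriction $d_k|_{\Ker(d_n)}$. Hence the whole argument reduces to understanding this restriction on the free generators $A_{i,n}$, and then recognizing it as the map $\pi$ of Lemma~\ref{lemma4}.

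First I would compute $d_k(A_{i,n})$ for $1\le i\le n-1$ and $1\le k\le n-1$ directly from (\ref{face-dk}). Since $j=n>k$, only three cases of the formula can occur: $i=k$ gives $d_k(A_{k,n})=0$; $i<k$ gives $d_k(A_{i,n})=A_{i,n-1}$; and $i>k$ gives $d_k(A_{i,n})=A_{i-1,n-1}$. A short check shows that as $i$ runs over $\{1,\dots,n-1\}\setminus\{k\}$ the images run over $A_{1,n-1},\dots,A_{n-2,n-1}$, each hit exactly once. By Proposition~\ref{proposition3} (applied with $n$ replaced by $n-1$) these elements freely generate the free Lie algebra $\Ker(d_{n-1})=L[A_{1,n-1},\dots,A_{n-2,n-1}]\subseteq L(P_{n-1})$, so $d_k|_{\Ker(d_n)}$ is a homomorphism of free Lie algebras whose image lies in this free subalgebra.

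With this in hand I would put $X=\{A_{k,n}\}$ and $Y=\{A_{j,n}\mid 1\le j\le n-1,\ j\ne k\}$, so that $Z=X\sqcup Y=\{A_{1,n},\dots,A_{n-1,n}\}$. The bijection furnished by $d_k$ between $Y$ and $\{A_{1,n-1},\dots,A_{n-2,n-1}\}$ induces an isomorphism $L[Y]\cong\Ker(d_{n-1})$; composing $d_k|_{\Ker(d_n)}$ with its inverse identifies $d_k|_{\Ker(d_n)}$ with exactly the homomorphism $\pi\colon L[Z]\to L[Y]$ of Lemma~\ref{lemma4}, since after this relabeling it kills $X$ and acts as the identity on $Y$. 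As $X$ consists of the single element $A_{k,n}$, Lemma~\ref{lemma4} then shows that $\Ker(\pi)=\Ker(d_n)\cap\Ker(d_k)$ is free on $A_{k,n}$ together with the $[\cdots[A_{k,n},A_{j_1,n}],\dots,A_{j_m,n}]$ where each $A_{j_i,n}\in Y$, i.e. $j_i\ne k,n$ and $j_i\le n-1$, which is precisely the family (\ref{genK}) and yields (\ref{obrazu}).

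Every step is routine once the reduction is set up; the only place demanding care — the main obstacle — is the index bookkeeping of the second paragraph. One must confirm from (\ref{face-dk}) that no further case of the formula intervenes (this is where $k<n$ is used) and, more importantly, that the induced map on the $Y$-generators is a genuine \emph{bijection} onto the free generators of $\Ker(d_{n-1})$ rather than a mere surjection or an arbitrary assignment, so that the identification with $\pi$ is legitimate. Since post-composing a homomorphism with an isomorphism leaves its kernel unchanged, once this identification is justified the conclusion follows at once from Lemma~\ref{lemma4}.
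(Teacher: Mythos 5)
Your proposal is correct and follows essentially the same route as the paper: identify $\Ker(d_n)$ with the free Lie algebra $L[A_{1,n},\dots,A_{n-1,n}]$ via Proposition~\ref{proposition3}, observe that the restriction of $d_k$ kills $A_{k,n}$ and sends the remaining generators bijectively to free generators, and apply Lemma~\ref{lemma4} with $X=\{A_{k,n}\}$. The paper states this more tersely (relabeling $A_{i,n}$ as $B_i$ and taking $k=n-1$ for simplicity); your version merely makes explicit the case analysis of~(\ref{face-dk}) and the identification of $d_k|_{\Ker(d_n)}$ with the map $\pi$ of Lemma~\ref{lemma4}.
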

\begin{proof}
Let us suppose for simplicity that $k=n-1$ and denote $A_{i,n}$ by $B_i$. Then the algebra from Proposition~\ref{proposition3} is the following free Lie algebra
$
 L[B_{1} , \ldots, B_{k}],
 $
  and the homomorphism $d_k$ can be expressed by the formulae
  \begin{equation*}
\begin{cases}
  B_1\mapsto B_1,\\
 \  \ \cdots,\\
  B_{k-1}\mapsto B_{k-1},\\
  B_k\mapsto 0.
    \end{cases}
    \end{equation*}
The assertion follows from Lemma~\ref{lemma4}.
\end{proof}
Another set of free generators of $ \Ker (d_n) \cap \Ker (d_{k})$ can be obtained
using Hall bases \cite{Bo}, \cite{Ha}. We remind the definition. We suppose that
all Lie monomials on $B_1,\ldots, B_k$ are ordered lexicographically.

Lie monomials $B_1,\ldots, B_k$ are the {\it standard} monomials of
degree $1$. If we have defined standard monomials of degrees $1, \ldots
, n - 1$,  then $[u, v]$ is a {\it standard} monomial if both of
the following conditions hold:

(1) $u$ and $v$ are standard monomials and $u>v$.

(2) If $u=[x, y]$ is the form of the standard monomial $u$, then $v\ge y$.

\noindent
Standard monomials form  the {\it Hall basis}  of a free Lie algebra (also over $\Z$).
Examples of standard monomials
are the products of the type:
\begin{equation}
[\cdots [B_{j_1}, B_{j_2}], B_{j_3}],\ldots, B_{j_t}], \ j_1> j_2\le
j_3\le \cdots\le j_t. \label{st}
\end{equation}
\begin{prop}\label{proposition6}
The intersection $\Ker(d_n)\cap \Ker(d_k)$, $k\not=n$,
is a free Lie algebra, generated by the standard monomials on $A_{i,n}$ where
the letter $A_{k,n}$ has only one enter. In other words the free generators are
standard monomials which are products of monomials of type (\ref{st}) where only one
such monomial contains one copy of $A_{k, n}$.
\end{prop}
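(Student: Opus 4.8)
The plan is to exploit the grading of the free Lie algebra $\Ker(d_n)=L[A_{1,n},\ldots,A_{n-1,n}]$ (Proposition~\ref{proposition3}) by the number of occurrences of the letter $A_{k,n}$, and then to compare the two candidate generating sets through the weight-one part of this grading. Write $I=\Ker(d_n)\cap\Ker(d_k)$. By Proposition~\ref{proposition5}, $I$ is the ideal of $L[A_{1,n},\ldots,A_{n-1,n}]$ generated by $A_{k,n}$, and it is free on the family (\ref{genK}). Assign to each Lie monomial its $A_{k,n}$-weight, namely the number of times $A_{k,n}$ occurs in it; every standard monomial is homogeneous for this weight, so the Hall basis respects the resulting grading. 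Since $I$ consists exactly of the elements of positive $A_{k,n}$-weight, it is a graded Lie subalgebra, and I write $I=\bigoplus_{d\ge1}I^{(d)}$ with $I^{(1)}$ its weight-one summand.

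First I would identify $I^{(1)}$ with the generating module of the free Lie algebra $I$. Each free generator in (\ref{genK}) contains the letter $A_{k,n}$ exactly once and no further copy of it, so it is weight-homogeneous of $A_{k,n}$-weight one; consequently the $A_{k,n}$-weight of any bracket word in these generators equals its length as a word in them, and the weight grading coincides with the word-length grading attached to the free generators (\ref{genK}). Hence the weight-one part $I^{(1)}$ is precisely the $\Z$-span of the generators (\ref{genK}), so $I$ is the free Lie algebra on the free $\Z$-module $I^{(1)}$, and (\ref{genK}) is a $\Z$-basis of $I^{(1)}$.

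Next I would show that the standard monomials with exactly one occurrence of $A_{k,n}$ also form a $\Z$-basis of $I^{(1)}$. The Hall monomials of $\Ker(d_n)$ split according to $A_{k,n}$-weight into those of weight $0$, which form a Hall basis of $L[\,A_{i,n}:i\neq k\,]$, and those of positive weight. Since $d_k$ restricts to a monomorphism on $L[\,A_{i,n}:i\neq k\,]$ (as in the proof of Proposition~\ref{proposition5}) and annihilates every monomial containing $A_{k,n}$, the monomials of positive weight form a homogeneous $\Z$-basis of $I=\Ker(d_k|_{\Ker(d_n)})$. Extracting the weight-one part, the standard monomials containing exactly one $A_{k,n}$ — the products of type (\ref{st}) described in Proposition~\ref{proposition6} — form a $\Z$-basis of $I^{(1)}$.

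Finally I would conclude. Since $I$ is the free Lie algebra on the free $\Z$-module $I^{(1)}$, any $\Z$-basis of $I^{(1)}$ is a system of free generators: a change of $\Z$-basis of the generating module extends uniquely to an automorphism of $I$ carrying one free generating set onto the other. As the standard monomials with a single $A_{k,n}$ form such a basis of $I^{(1)}$, they freely generate $I$, which is the assertion. The step requiring the most care is the passage over $\Z$: one must check that $I^{(1)}$ is free as a $\Z$-module (it is, being a sum of graded summands of the $\Z$-free algebra $\Ker(d_n)$) and that the identification $I=L[I^{(1)}]$ is legitimate, that is, that the $A_{k,n}$-weight grading genuinely matches the word-length grading coming from (\ref{genK}). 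This matching is exactly the observation that each generator in (\ref{genK}) is weight-homogeneous of weight one, which is the crux of the argument.
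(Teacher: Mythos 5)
Your proof is correct, but it takes a genuinely different route from the paper's. The paper handles Proposition~\ref{proposition6} in a single sentence, by appealing to the elimination procedure of Shirshov \cite{S} and Witt \cite{Wi} (the algorithm behind their theorem that a subalgebra of a free Lie algebra is free), applied to the subalgebra $\Ker(d_n)\cap\Ker(d_k)$ of $L[A_{1,n},\ldots,A_{n-1,n}]$. You instead grade $\Ker(d_n)$ by the number of occurrences of $A_{k,n}$; identify $I=\Ker(d_n)\cap\Ker(d_k)$ with the whole positive-weight part; use the fact that the free generators (\ref{genK}) of Proposition~\ref{proposition5} all have weight one to see that $I$ is the free Lie algebra on its weight-one component $I^{(1)}$; check that the standard monomials containing a single $A_{k,n}$ form another $\Z$-basis of $I^{(1)}$ (Hall monomials are multihomogeneous, so those of positive weight are a basis of $I$); and finish by a change of basis, that is, an automorphism of $I=L[I^{(1)}]$ carrying (\ref{genK}) onto the standard monomials. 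Your route is self-contained modulo Proposition~\ref{proposition5}, it makes transparent why ``exactly one occurrence of $A_{k,n}$'' is the right condition (these monomials form a basis of the linear part of $I$ as a free Lie algebra), and it actually yields the sharper statement that \emph{any} homogeneous $\Z$-basis of $I^{(1)}$ freely generates $I$. The paper's route is more general --- the Shirshov--Witt procedure needs no weight-one generating set and applies to arbitrary subalgebras --- but as written it is only a citation, leaving to the reader the verification that the procedure outputs precisely the stated monomials. The two points you flag at the end (invariance of basis cardinality for free $\Z$-modules, so that a bijection of bases extends to a module automorphism, and functoriality of the free Lie algebra construction, which promotes that automorphism to a Lie algebra automorphism of $I$) are exactly the facts needed to make the last step rigorous, and both are standard.
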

\begin{proof}
We apply the procedure of constructing of a set of free generators
for a sub Lie algebra which was used by Shirshov \cite{S} and Witt \cite{Wi} in their proofs that a
Lie subalgebra of
a free Lie algebra is free.
\end{proof}

Lemma~\ref{lemma4} is useful for having an algorithm to recursively determine a set of free generators for $\LPBn$. A \textit{Lie monomial} $W$ on the letters $A_{1,n},A_{2,n},\ldots,A_{n-1,n}$ means $W=A_{i,n}$ for some $1\leq i\leq n-1$ or a Lie bracket $W=[A_{j_1,n}, A_{j_2,n},\ldots, A_{j_t,n}]$ under any possible bracket arrangements with entries on the letters $A_{i,n}$'s.

\begin{dfn}
{\rm We recursively define the sets $\mathcal{K}(n)_{k}$, $1\leq
k\leq n$, in the reverse order as follows:
\begin{enumerate}
\item[1)]  Let $\mathcal{K}(n)_n=\{A_{1,n},A_{2,n},\ldots,A_{n-1,n}\}.$
\item[2)] Suppose that $\mathcal{K}(n)_{k+1}$ is defined as a subset of Lie monomials on the letters
$$
A_{1,n},A_{2,n},\ldots,A_{n-1,n}
$$
with $k<n$. Let
$$\calA_k=\{W\in \mathcal{K}(n)_{k+1} \ | \ W \textrm{ does not contain } A_{k,n} \textrm{ in its entries}\}.$$
\item[3)] Define
$$\mathcal{K}(n)_k=\{W' \textrm{ and } [\cdots [[W', W_1],W_2],\ldots,W_t]\}
$$
for $W'\in \mathcal{K}(n)_{k+1}\smallsetminus \calA_k$ and $W_1,W_2,\ldots,W_t\in \calA_k$ with $t\geq 1$.
Note that $\mathcal{K}(n)_k$ is again a subset of  Lie monomials on letters $
A_{1,n},A_{2,n},\ldots,A_{n-1,n}$.
\end{enumerate}}
\end{dfn}

\begin{example}
{\rm Let $n=3$. The set $\mathcal{K}(3)_1$ is constructed by the following steps:
\begin{enumerate}
\item[1)] $\mathcal{K}(3)_3=\{A_{1,3},A_{2,3}\}$.
\item[2)] $\mathcal A_2=\{A_{1,2}\}$,
$$\mathcal{K}(3)_2=\{A_{2,3}, [[A_{2,3}, A_{1,3}],\ldots,A_{1,3}] \}.$$
\item[3)]
 $\mathcal A_1=\{A_{2,3}\}$,

 $\mathcal{K}(3)_1=\{[\cdots [A_{2,3}, A_{1,3}],\ldots, A_{1,3}], A_{2,3}], \ldots,  A_{2,3}]\}$.
 \end{enumerate}}
\end{example}

\begin{rem}
All elements of $\mathcal{K}(3)_1$ under the canonical inclusion
 $\mathrm{L^P(\Brun_n)}\hookrightarrow L(P_3)$
are mapped to the elements (not all) of a Hall basis for the free Lie subalgebra of
of $L(P_3)$ generated by $A_{1,3}$ and $A_{2,3}$.
\end{rem}
\begin{thm}\label{theorem8}
The Lie algebra $\LPBn$ is a free Lie algebra generated by $\mathcal{K}(n)_1$ as a set of free generators.
\end{thm}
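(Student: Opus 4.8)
The plan is to prove a slightly more general statement by downward induction on $k$: for every $1\le k\le n$ the Lie subalgebra
$$
M_k:=\bigcap_{i=k}^n \ker\bigl(d_i\colon L(P_n)\to L(P_{n-1})\bigr)
$$
is free, with $\mathcal{K}(n)_k$ as a set of free generators. The base case $k=n$ is exactly Proposition~\ref{proposition3}, which identifies $M_n=\ker(d_n)$ with the free Lie algebra $L[A_{1,n},\ldots,A_{n-1,n}]$ on $\mathcal{K}(n)_n$. Since $M_1=\LPBn$ by Proposition~\ref{Proposition2}, the case $k=1$ is precisely the assertion of the theorem, so it suffices to carry out the inductive step.

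For the step I would assume $M_{k+1}=L[\mathcal{K}(n)_{k+1}]$ and compute $M_k=M_{k+1}\cap\ker(d_k)=\ker\bigl(d_k|_{M_{k+1}}\bigr)$ by applying Lemma~\ref{lemma4} to the restriction of $d_k$ to $M_{k+1}$. The partition of generators required by the lemma is the one already built into the definition of $\mathcal{K}(n)_k$: set $Y=\calA_k$ (the monomials of $\mathcal{K}(n)_{k+1}$ not involving $A_{k,n}$) and $X=\mathcal{K}(n)_{k+1}\smallsetminus\calA_k$ (those that do involve $A_{k,n}$). With this identification the generating family $\{\,W',\ [\cdots[[W',W_1],W_2],\ldots,W_t]\,\}$ produced by Lemma~\ref{lemma4} is exactly $\mathcal{K}(n)_k$, so the theorem follows once the hypotheses of the lemma are verified for $\pi=d_k|_{M_{k+1}}$.

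Checking those hypotheses is the heart of the proof and splits into two points. First, every $W\in X$ satisfies $d_k(W)=0$: since $d_k(A_{k,n})=0$ by \eqref{face-dk} and $d_k$ is a Lie homomorphism, a routine induction on bracket length shows that any Lie monomial having $A_{k,n}$ among its leaves is annihilated by $d_k$. The second point is the main obstacle, and it is the assertion that $d_k$ carries $\calA_k$ to a set of \emph{free} generators of $d_k(M_{k+1})$, with $d_k$ injective on $\calA_k$; this is what guarantees that, after identifying $d_k(M_{k+1})$ with the target free algebra, $d_k|_{M_{k+1}}$ really is the projection $\pi$ of Lemma~\ref{lemma4}, so that $\ker(d_k|_{M_{k+1}})=\ker(\pi)$ is the ideal generated by $X$. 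The key observation is that on the subalgebra $L[A_{i,n}\mid i\ne k]$ the map $d_k$ is a \emph{relabeling isomorphism}: by \eqref{face-dk} it sends the letters $A_{i,n}$ with $i\ne k$, $i\le n-1$, bijectively onto $A_{1,n-1},\ldots,A_{n-2,n-1}$, hence restricts to an isomorphism onto $\ker(d_{n-1}\colon L(P_{n-1})\to L(P_{n-2}))$ by Proposition~\ref{proposition3} applied to $n-1$. Since $\calA_k$ is a \emph{subset} of the free generating set $\mathcal{K}(n)_{k+1}$ of $M_{k+1}$, it freely generates the subalgebra it spans, and that subalgebra lies inside $L[A_{i,n}\mid i\ne k]$; transporting it through the relabeling isomorphism shows that $d_k(\calA_k)$ freely generates its span and that $d_k$ is injective there. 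Because $d_k|_{M_{k+1}}$ then factors as an injection composed with $\pi$, its kernel equals $\ker(\pi)$, and Lemma~\ref{lemma4} delivers $\mathcal{K}(n)_k$ as free generators of $M_k$, completing the induction and hence the proof.
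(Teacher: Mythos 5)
Your proposal is correct and follows essentially the same route as the paper's proof: downward induction on $k$ with base case Proposition~\ref{proposition3}, splitting $\mathcal{K}(n)_{k+1}$ into $\calA_k$ and its complement, using the relabeling isomorphism of $d_k$ on $L[A_{i,n}\mid i\neq k]$ to see that $d_k\circ\phi$ is injective, and then identifying $\ker(d_k|_{M_{k+1}})$ with the kernel of the projection $\pi$ so that Lemma~\ref{lemma4} yields $\mathcal{K}(n)_k$ as free generators. Your explicit justification that $\calA_k$ freely generates its span (as a subset of the free generating set $\mathcal{K}(n)_{k+1}$ given by the induction hypothesis) is a slightly more careful statement of a step the paper passes over tersely, but the argument is the same.
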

\begin{proof}
The assertion follows from the statement that $\mathcal{K}(n)_k$ is a set of free generators for
$$
\Ker(d_n)\cap\Ker(d_{n-1})\cap \cdots\cap \Ker(d_k)
$$
for $1\leq k\leq n$. We prove this statement by induction in reverse order. For $k=n$ it follows from Proposition~\ref{proposition3}. Suppose that the statement holds for $k+1$ with $k<n$.  Let
$$\mathcal A_k=\{W\in \mathcal{K}(n)_{k+1} \ | \ W \textrm{ does not contain } A_{k,n} \textrm{ in its entries}\}$$
and let $\mathcal{B}_k=\mathcal{K}(n)_{k+1}\smallsetminus \mathcal{A}_k$. By induction,
$$
\Ker(d_n)\cap\Ker(d_{n-1})\cap \cdots\cap \Ker(d_{k+1})=L[\mathcal{K}(n)_{k+1}]
$$
is a free Lie algebra freely generated by $\mathcal{K}(n)_{k+1}$.
The Lie algebra generated by $\calA_k$ is a  Lie subalgebra of the Lie algebra
freely generated by $A_{1,n},\ldots,A_{n-1,n}$. Thus the Lie
homomorphism
$$
\phi\colon L[\calA_k]\longrightarrow L[A_{1,n},\ldots,A_{n-1,n}]
$$
with $\phi(W)=W$ for $W\in \calA_k$ is a monomorphism with its image given by the  Lie subalgebra generated by $\calA_k$.

Consider the homomorphism
$$
d_k\colon L[A_{1,n},\ldots, A_{n-1,n}]\longrightarrow L[A_{1,n-1},\ldots, A_{n-2,n-1}]
$$
given in formula~(\ref{face-dk}). We show that the composite
$$
d_k\circ \phi\colon L[\calA_k]\longrightarrow L[A_{1,n-1},\ldots, A_{n-2,n-1}].
$$
is a monomorphism. By the definition of $\calA_k$, the image
$\phi(L[\calA_k])$ is contained in the  Lie subalgebra
$$
L[A_{1,n},\ldots,A_{k-1,n},A_{k+1,n},\ldots,A_{n-1,n}]
$$
of $L[A_{1,n},\ldots,A_{n-1,n}]$. Thus there is a commutative diagram of Lie algebras
 \begin{diagram}
L[\calA_k]&\rTo^{\phi}& L[A_{1,n},\ldots, A_{n-1,n}]\\
\dTo>{\phi'}&\ruInto&\dTo>{d_k}\\
L[A_{1,n},\ldots,A_{k-1,n},A_{k+1,n},\ldots,A_{n-1,n}]& \rTo^{{d_k}|}&  L[A_{1,n-1},\ldots, A_{n-2,n-1}],\\
\end{diagram}
where $\phi'$ is defined by the same formula as $\phi$.
Since $\phi$ is a monomorphism, so is $\phi'$. From the definition, the restriction
$$
d_k|\colon L[A_{1,n},\ldots,A_{k-1,n},A_{k+1,n},\ldots,A_{n-1,n}]\longrightarrow L[A_{1,n-1},\ldots, A_{n-2,n-1}]
$$
is an isomorphism. It follows that $d_k\circ \phi\colon L[\calA_k]\to
L[A_{1,n-1},\ldots, A_{n-2,n-1}]$ is a monomorphism.

Observe that $d_k(W)=0$ for $W\in\mathcal{K}(n)_{k+1}\smallsetminus \calA_k$. There is a commutative diagram of Lie algebras
\begin{diagram}
L[\mathcal{K}(n)_{k+1}]&\rEq& \Ker(d_n)\cap\cdots\cap \Ker(d_{k+1})&\rInto & L[A_{1,n},\ldots, A_{n-1,n}]\\
\dTo>{\pi} &     &  &\rdTo>{d_k|} &\dTo>{d_k}\\
L[\calA_k]&\rInto^{d_k\circ\phi}&&& L[A_{1,n-1},\ldots, A_{n-2,n-1}],\\
\end{diagram}
where $\pi(W)=0$ for $W\in \mathcal{K}(n)_{k+1}\smallsetminus \calA_k$ and $\pi(W)=W$ for $W\in \calA_k$. It follows that
$$
\Ker(d_n)\cap\cdots\cap \Ker(d_{k})=$$ $$=\Ker(d_k|\colon \Ker(d_n)\cap\cdots\cap \Ker(d_{k+1})\to L[A_{1,n-1},\ldots, A_{n-2,n-1}])
$$
is given by the kernel of
$$
\pi\colon
L[\mathcal{K}(n)_{k+1}]=L[(\mathcal{K}(n)_{k+1}\smallsetminus
\calA_k)\sqcup\calA_k]\longrightarrow L[\calA_k],
$$
which is freely generated by $\mathcal{K}(n)_k$ by Lemma~\ref{lemma4}. This finishes the proof.
\end{proof}

\begin{example}
{\rm Let $n=4$. The set $\mathcal{K}(4)_1$ is constructed by the following steps:
\begin{enumerate}
\item[1)] $\mathcal{K}(4)=\{A_{1,4},A_{2,4},A_{3,4}\}$.
\item[2)] $\calA_3=\{A_{1,4},A_{2,4}\}$,
$$\mathcal{K}(4)_3=\{[[A_{3,4}, A_{j_1,4}],\ldots,A_{j_t,4}] \ | \ 1\leq j_1,\ldots,j_t\leq 2, \ t\geq 0\},$$ where, for $t=0$, $[[A_{3,4}, A_{j_1,4}],\ldots,A_{j_t,4}] =A_{3,4}$.
\item[3)]
For constructing $\mathcal{K}(4)_2$, let $W=[[A_{3,4}, A_{j_1,4}],\ldots,A_{j_t,4}]\in \mathcal{K}(4)_3$. If $W$ does not contain $A_{2,4}$, then $W=A_{3,4}$ or \newline
$W=[A_{3,4}, A_{j_1,4}],\ldots,A_{j_t,4}]$ with $j_1=j_2=\cdots=j_t=1$. Let
$$
\mathrm{ad}^t(b)(a)=[[a,b],b,\ldots,b]
$$
with $t$ entries of $b$, where $\mathrm{ad}^0(b)(a)=a$. Then $W$ does not contain $A_{2,4}$ if and only if
$$
W=\mathrm{ad}^t(A_{1,4})(A_{3,4})
$$
for $t\geq 0$. So $\calA_2=\{\mathrm{ad}^t(A_{1,4})(A_{3,4}), t\geq 0\}$.
 From the definition, $\mathcal{K}(4)_2$ is given by
$$
[[A_{3,4}, A_{j_1,4}],\ldots,A_{j_t,4}]\quad\textrm{ and}
$$
$$
[[[[A_{3,4}, A_{j_1,4}],\ldots,A_{j_t,4}], \mathrm{ad}^{s_1}(A_{1,4})(A_{3,4})],\ldots,
 \mathrm{ad}^{s_q}(A_{1,4})(A_{3,4})],
$$
where $1\leq j_1,\ldots,j_t\leq 2$ with at least one $j_i=2$, $s_1,\ldots,s_q\geq 0$ and
 $q\geq 1$.
\item[4)] For constructing $\mathcal{K}(4)_1$, let $W$ be an element of $\mathcal{K}(4)_2$,
$$
W=
$$
$$
\qquad \quad  [[[[A_{3,4}, A_{j_1,4}],\ldots,A_{j_t,4}], \mathrm{ad}^{s_1}(A_{1,4})(A_{3,4})],\ldots, \mathrm{ad}^{s_q}(A_{1,4})(A_{3,4})],
$$
where, for $q=0$, $W=[[A_{3,4}, A_{j_1,4}],\ldots,A_{j_t,4}]$.
Then $W$ does not contain $A_{1,4}$ if and only if $q=0$ and \newline
$W=[[A_{3,4}, A_{j_1,4}],\ldots,A_{j_t,4}]$ with $j_1=j_2=\cdots=j_t=2$, namely
$$
W=\mathrm{ad}^t(A_{2,4})(A_{3,4})
$$
for $t\geq 1$. So, $\calA_1=\{\mathrm{ad}^t(A_{2,4})(A_{3,4}), t\geq 1\}$.
Thus $\mathcal{K}(4)_1$, which is a set of free generators for $\mathrm{L}^P(\mathrm{Brun}_4)$, is given by
$$
W \textrm{ and } [[W, \mathrm{ad}^{l_1}(A_{2,4})(A_{3,4})],\ldots, \mathrm{ad}^{l_p}(A_{2,4})(A_{3,4})],
$$
where $l_i\geq 1$ for $1\leq i\leq p$ with $p\geq 1$ and
$$
W=
$$
$$
\qquad \quad
[[[[A_{3,4}, A_{j_1,4}],\ldots,A_{j_t,4}], \mathrm{ad}^{s_1}(A_{1,4})(A_{3,4})],\ldots, \mathrm{ad}^{s_q}(A_{1,4})(A_{3,4})]
$$
is an element of $\mathcal{K}(4)_2$,
so that each of $A_{2,4}$ and $A_{1,4}$ appears in $W$ at least once. \hfill $\Box$
\end{enumerate}
}
\end{example}

From the above example, one can see that the set $\mathcal{K}(n)_1$ is still complicated in the sense that its elements involve the iterated operations of  normal Lie brackets from left to right $[\cdots[\ , \ ],\ldots, \ ]$.

\begin{question}
Determine a set of free generators for $\LPBn$ using normal Lie brackets from left to right.
\end{question}

\section{The symmetric Lie products of Lie ideals}

Let $L$ be a Lie algebra and $I_1, \ldots, I_n$ its ideals. We
define the notion of the fat bracket sum and the symmetric bracket
sum of ideals which is similar to the corresponding fat commutator
product and symmetric commutator product in groups
 \cite{BMVW}, \cite{LiWu}.
Given a Lie algebra $L$, and a set of its ideals  $I_1,\ldots, I_n,\
(l\geq 2)$, the fat bracket sum of these ideals is defined to be the
Lie ideal of $L$ generated by all of the commutators
\begin{equation}\label{equation1.1}
\beta^t(a_{i_1},\ldots,a_{i_t}),
\end{equation}
where
\begin{enumerate}
\item[1)] $1\leq i_s\leq n$;
\item[2)] $\{i_1,\ldots,i_t\}=\{1,\ldots,n\}$, so, each integer in $\{1,2,\cdots,n\}$ appears as at least one of the
integers $i_s$;
\item[3)] $a_j\in I_j$;
\item[4)] $\beta^t$ runs over all of the  bracket arrangements of weight $t$ (with $t\geq n$).
\end{enumerate}
 The symmetric bracket sum of
these ideals is defined as
$$
[[I_1,I_2],\ldots, I_l]_S:=\sum_{\sigma\in
\Sigma_l}[[I_{\sigma(1)},I_{\sigma(2)}],\ldots,I_{\sigma(n)}],
$$
where $\Sigma_n$ is the symmetric group o $n$ letters.

As in \cite{BMVW}, \cite{LiWu} we can prove that the symmetric
bracket sum of $I_1,\ldots, I_n,\ (n\geq 2)$ is the same as the fat
bracket sum.
\begin{thm}\label{theorem3.1}
Let $I_j$ be any Lie ideals of a Lie algebra $L$ with $1\leq j\leq
n$. Then
$$
[[I_1,I_2,\ldots,I_n]]=[[I_1,I_2],\ldots,I_n]_S.
$$
\end{thm}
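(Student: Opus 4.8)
The plan is to prove the two inclusions separately, after recording that \emph{both} sides are Lie ideals of $L$. The right-hand side is a finite sum of iterated brackets of ideals, and an iterated bracket of ideals is again an ideal, since $[[I,J],L]\subseteq [[I,L],J]+[I,[J,L]]\subseteq [I,J]$ by the Leibniz form of the Jacobi identity; the left-hand side is an ideal by definition. The inclusion $[[I_1,I_2],\ldots,I_n]_S\subseteq [[I_1,I_2,\ldots,I_n]]$ is then immediate: as an ideal, each summand $[[I_{\sigma(1)},I_{\sigma(2)}],\ldots,I_{\sigma(n)}]$ is generated by the left-normed brackets $[[a_{\sigma(1)},a_{\sigma(2)}],\ldots,a_{\sigma(n)}]$ with $a_{\sigma(s)}\in I_{\sigma(s)}$, and these are exactly fat-bracket generators of weight $t=n$ in which every index of $\{1,\ldots,n\}$ occurs once; hence each summand lies in the fat bracket sum.

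For the reverse inclusion I would first reduce to left-normed generators. By repeated use of the Jacobi identity, every bracket arrangement $\beta^t(a_{i_1},\ldots,a_{i_t})$ is a $\Z$-linear combination of left-normed brackets $[[a_{j_1},a_{j_2}],\ldots,a_{j_t}]$ whose index sequence $(j_1,\ldots,j_t)$ is a rearrangement of $(i_1,\ldots,i_t)$; in particular the defining conditions (every index of $\{1,\ldots,n\}$ occurs, and $t\geq n$) are preserved. Since $[[I_1,I_2],\ldots,I_n]_S$ is an ideal, it suffices to prove the following claim: if $b_1,\ldots,b_m$ satisfy $b_s\in I_{c_s}$ with $m\geq n$ and $\{c_1,\ldots,c_m\}=\{1,\ldots,n\}$, then the left-normed bracket $[[b_1,b_2],\ldots,b_m]$ lies in $[[I_1,I_2],\ldots,I_n]_S$.

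I would prove this claim by a double induction: an outer induction on $n$ and, for fixed $n$, an inner induction on the length $m$. When $m=n$ the index sequence is a permutation $\sigma$ and the bracket lies in the summand $[[I_{\sigma(1)},\ldots],I_{\sigma(n)}]$. When $m>n$ some index repeats, and there are two cases. If the final index $c_m$ already occurs among $c_1,\ldots,c_{m-1}$, then $w:=[[b_1,b_2],\ldots,b_{m-1}]$ still involves every index and $m-1\geq n$, so $w$ lies in the symmetric bracket sum by the inner induction, whence $[w,b_m]\in [S,L]\subseteq S$. The remaining case, where $c_m$ occurs only in the last slot, is the crux: here $w$ is a fat-bracket generator for the $n-1$ ideals $\{I_j:j\neq c_m\}$ (all their indices occur and $m-1\geq n-1$), so by the \emph{outer} induction hypothesis $w$ lies in their symmetric bracket sum; bracketing on the right with $b_m\in I_{c_m}$ carries each summand $[[I_{\tau(1)},\ldots],I_{\tau(n-1)}]$ into $[[[I_{\tau(1)},\ldots],I_{\tau(n-1)}],I_{c_m}]$, a left-normed summand of the full symmetric bracket sum, so $[w,b_m]\in S$.

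I expect the genuine obstacle to be exactly this last case: setting up the induction so that stripping off the unique final factor lands one in the $(n-1)$-ideal statement, and verifying that re-bracketing the resulting symmetric sum on the right with the removed ideal reproduces honest (left-normed, with $c_m$ placed last) summands of the $n$-ideal symmetric sum. The base cases are routine: for $n=1$ the fat bracket collapses to $I_1$ because $I_1$ is a subalgebra, and the Jacobi reduction to left-normed brackets is standard. Combining the claim with the Jacobi reduction gives $[[I_1,I_2,\ldots,I_n]]\subseteq [[I_1,I_2],\ldots,I_n]_S$, and together with the easy inclusion this proves the theorem.
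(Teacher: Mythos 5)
Your proof is correct, but it is organized differently from the paper's. The common core is your ``claim'' about left-normed brackets: it is precisely the paper's Lemma 3.3 (stated there at the level of ideals, $[[I_{i_1},I_{i_2}],\ldots,I_{i_p}]\leq [[I_1,I_2],\ldots,I_n]_S$ whenever $\{i_1,\ldots,i_p\}=\{1,\ldots,n\}$), and your double induction --- outer on $n$, inner on the length, with the case split according to whether the last index already occurs earlier --- is essentially the same argument the paper uses to prove that lemma. The genuine difference is in how arbitrary bracket arrangements are handled. You dispose of them at the outset by the standard fact that, through iterated Jacobi identities, any Lie monomial is a $\Z$-linear combination of left-normed monomials on a rearrangement of the same entries; after that, only the left-normed case is needed. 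The paper never performs this reduction: it keeps general bracket arrangements and processes them through their recursive structure $\beta^t=[\beta^p,\beta^{t-p}]$, which forces two additional lemmas (Lemma 3.2, treating arbitrary arrangements of weight exactly $n$, and Lemma 3.4, treating brackets of two left-normed ideal products $[[[I_{i_1},\ldots],I_{i_p}],[[I_{j_1},\ldots],I_{j_q}]]$) plus a further double induction (on $n$ and on the weight $t$) inside the proof of the theorem itself, splitting according to whether one of the two halves already involves all $n$ indices. Your route is shorter and concentrates all the combinatorics in a single lemma, at the price of invoking the left-normed spanning property, which you should prove or cite precisely since it must hold over $\Z$ (your Jacobi sketch does give this); the paper's route is longer but self-contained, using nothing beyond the Jacobi identity applied one step at a time.
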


To prove this theorem, we need the following lemmas. 

\begin{lem}\label{lemma3.1}
Let $L$ be a Lie algebra and let $A,B,C$ be Lie ideals of $L$. Then
any one of the Lie ideal $[A,[B,C]]$, $[[A,B],C]$ and $[[A,C],B]$ is
a Lie ideal of the sum of the other two.\hfill $\Box$
\end{lem}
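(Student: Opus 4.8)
The plan is to reduce the statement to two standard facts about bracket products of ideals, after which the conclusion is essentially formal. Throughout, for ideals $M,N$ of $L$ I write $[M,N]$ for the additive span of the brackets $[m,n]$ with $m\in M$, $n\in N$, and I set
$$P := [A,[B,C]], \qquad Q := [[A,B],C], \qquad R := [[A,C],B].$$
The first fact I would use is that the bracket product of two ideals is again an ideal: if $M$ and $N$ are ideals of $L$, then for any $x\in L$ the Jacobi identity gives $[x,[m,n]] = [[x,m],n] + [m,[x,n]]$, and since $[x,m]\in M$ and $[x,n]\in N$ this lies in $[M,N]$; as $[M,N]$ is spanned by such generators, it is an ideal of $L$. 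Applying this twice, each of $P$, $Q$, $R$ is an ideal of $L$, and hence so is every pairwise sum $Q+R$, $P+R$, $P+Q$; in particular each pairwise sum is a Lie subalgebra of $L$.

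Next I would record the Jacobi identity on generators in the rearranged form
$$[a,[b,c]] = [[a,b],c] - [[a,c],b], \qquad a\in A,\ b\in B,\ c\in C,$$
and extend it by bilinearity. Reading it as written gives $P\subseteq Q+R$; the rearrangement $[[a,b],c] = [a,[b,c]] + [[a,c],b]$ gives $Q\subseteq P+R$; and $[[a,c],b] = [[a,b],c] - [a,[b,c]]$ gives $R\subseteq P+Q$. These three containments show that all pairwise sums coincide with $S := P+Q+R$, so that the sum of the other two in the statement denotes the same Lie subalgebra of $L$ no matter which of $P,Q,R$ is singled out.

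Finally I would conclude by absorption. To see that $P$ is an ideal of $Q+R$, note that $P\subseteq Q+R$ is already in hand, so $P$ is a Lie subalgebra of $Q+R$; and since $P$ is an ideal of all of $L$ while $Q+R\subseteq L$, we have $[Q+R,P]\subseteq[L,P]\subseteq P$, which is precisely the ideal condition. The identical argument applied to $Q\subseteq P+R$ and to $R\subseteq P+Q$ settles the other two cases. I do not expect a genuine obstacle here: the lemma is formal once one observes that $P,Q,R$ are already ideals of $L$, and the only point needing care is the routine bookkeeping that the bracket-product symbol denotes a span of generators, so that both the ideal property and the Jacobi containments may be verified on generators and then extended by bilinearity.
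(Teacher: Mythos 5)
Your proof is correct and complete: the two ingredients (the bracket of two ideals is again an ideal of $L$, verified via the Jacobi identity on spanning generators, and the rearranged Jacobi identity $[a,[b,c]]=[[a,b],c]-[[a,c],b]$ giving the three containments $P\subseteq Q+R$, $Q\subseteq P+R$, $R\subseteq P+Q$, followed by the absorption step $[Q+R,P]\subseteq [L,P]\subseteq P$) are exactly what is needed. The paper states this lemma without proof, marking it as evident, and your argument is precisely the standard one the authors leave implicit, so there is nothing to compare beyond noting that your write-up supplies the missing details correctly.
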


\begin{lem}\label{lemma3.2}
Let $I_1,\ldots,I_n$ be Lie ideals of $L$. Let $a_j\in I_j$ for
$1\leq j\leq n$. Then
$$
\beta^n(a_{\sigma(1)},\ldots,a_{\sigma(n)})\in
[[I_1,I_2],\ldots,I_n]_S
$$
for any $\sigma\in \Sigma_n$ and any bracket arrangement $\beta^n$
of weight $n$.
\end{lem}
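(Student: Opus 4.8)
The plan is to prove the statement by a double induction: an outer induction on the number $n$ of ideals, and, inside the inductive step, an inner induction on the weight $q$ of the right-hand branch of the outermost bracket of $\beta^n$. The engine throughout is the Jacobi identity in its ideal form, Lemma~\ref{lemma3.1}, which lets me trade an arbitrary bracket arrangement for left-normed ones without leaving the symmetric bracket sum. Because $[[I_1,I_2],\ldots,I_n]_S$ is already symmetrized over all orderings of the ideals, the permutation $\sigma$ merely relabels which ideal occupies which slot and causes no trouble; the entire content of the lemma is the passage from an arbitrary bracketing $\beta^n$ to the left-normed bracketings appearing as the summands on the right.

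The base cases $n=1,2$ are immediate: for $n=2$ one has $[a_{\sigma(1)},a_{\sigma(2)}]\in[I_{\sigma(1)},I_{\sigma(2)}]$, a summand of the symmetric bracket sum. For the inductive step I write the outermost bracket as $\beta^n=[\beta^p,\beta^q]$ with $p+q=n$ and $p,q\ge1$, so that the given element is $[u,v]$ with $u$ the value of $\beta^p$ on $p$ of the $a_{\sigma(i)}$ and $v$ the value of $\beta^q$ on the remaining $q$. I then run the inner induction on $q$.

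For the inner base case $q=1$, the element $v=a_j$ is a single generator and $u$ is a bracket arrangement of weight $n-1$ in one generator from each of the ideals $\{I_i : i\neq j\}$. By the outer induction hypothesis applied to this family of $n-1$ ideals, $u$ lies in $\sum_{\tau}[[I_{\tau(1)},\ldots],I_{\tau(n-1)}]$, the sum running over orderings $\tau$ of $\{1,\ldots,n\}\setminus\{j\}$. Since each $I_i$ is an ideal, $[u,a_j]$ then lies in $\sum_{\tau}[[[I_{\tau(1)},\ldots],I_{\tau(n-1)}],I_j]$, and each summand is exactly a left-normed bracket $[[I_{\rho(1)},\ldots],I_{\rho(n)}]$ for the permutation $\rho=(\tau(1),\ldots,\tau(n-1),j)$ of $\{1,\ldots,n\}$, hence a summand of $[[I_1,I_2],\ldots,I_n]_S$. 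For the inner step $q\ge2$, I write $v=[v_1,v_2]$ of weights $q_1,q_2\ge1$ with $q_1+q_2=q$, and apply the Jacobi identity
\[
[u,[v_1,v_2]]=[[u,v_1],v_2]-[[u,v_2],v_1],
\]
whose ideal-level incarnation is Lemma~\ref{lemma3.1}. Each term on the right is a bracket arrangement of all $n$ generators whose outermost right-hand branch (namely $v_2$, respectively $v_1$) has weight strictly less than $q$; by the inner induction hypothesis both lie in $[[I_1,I_2],\ldots,I_n]_S$, and therefore so does their difference.

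The main obstacle is organizational rather than computational: one must arrange the two nested inductions so that the inner step genuinely decreases the weight of the right branch (it does, since $q_1,q_2<q$) while the outer step only ever appeals to strictly fewer ideals, and one must verify in the base case $q=1$ that appending $j$ to an ordering $\tau$ of the complementary indices yields a genuine permutation $\rho$ of $\{1,\ldots,n\}$, so that the resulting left-normed brackets really are summands of the symmetrized sum. Once this bookkeeping is in place, the Jacobi reduction together with the closure of ideals under bracketing (Lemma~\ref{lemma3.1}) completes the proof with no further calculation.
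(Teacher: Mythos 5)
Your proof is correct, and it follows the paper's skeleton: a double induction, outer on the number $n$ of ideals, inner on the weight $q$ of the right-hand branch of the outermost bracket, with the inner base case $q=1$ handled exactly as in the paper (apply the outer hypothesis to the weight-$(n-1)$ left branch, then append the remaining generator to get left-normed brackets that are summands of the symmetric sum). Where you diverge is in the inner inductive step. The paper first applies the outer induction hypothesis to \emph{both} branches, replacing each by a sum of left-normed brackets over its own subset of ideals, and only then applies the Jacobi identity at the level of ideals (in the spirit of Lemma~\ref{lemma3.1}), splitting into two terms, one of which is disposed of by the $q=1$ case and the other by the inner hypothesis. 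You skip this symmetrization detour entirely: you decompose the right branch itself as $v=[v_1,v_2]$ and apply the element-level Jacobi identity $[u,[v_1,v_2]]=[[u,v_1],v_2]-[[u,v_2],v_1]$, observing that both resulting terms are weight-$n$ bracket arrangements with strictly smaller right-branch weight, so the (strong) inner hypothesis finishes immediately. Your version is genuinely simpler for this lemma --- it needs neither the outer hypothesis nor any case analysis inside the inner step, and your appeal to Lemma~\ref{lemma3.1} is in fact inessential since you compute with elements (likewise, in your base case only bilinearity of the bracket is used, not the ideal property). What the paper's heavier formulation buys is consistency with the later arguments (Lemma~\ref{lemma3.4} and Theorem~\ref{theorem3.1}), where passing through symmetrized sub-brackets over proper subsets of the ideals is actually forced; for Lemma~\ref{lemma3.2} itself, your direct route suffices.
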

\begin{proof}
The proof is given by double induction. The first induction is on
$n$. Clearly the assertion holds for $n=1$. Suppose that the
assertion holds for $m$ with $m<n$. Given an element
$\beta^n(a_{\sigma(1)},\ldots a_{\sigma(n)})$ as in the statement of the
lemma we have
$$
\beta^n(a_{\sigma(1)},\ldots,a_{\sigma(n)})=[\beta^p(a_{\sigma(1)},\ldots,a_{\sigma(p)}),\beta^{n-p}(a_{\sigma(p+1)},\ldots,a_{\sigma(n)})]
$$
for some bracket arrangements $\beta^p$ and $\beta^{n-p}$ with
$1\leq p\leq n-1$. The second induction is on $q=n-p$. If $q=1$, we
have
$$
\beta^{n-1}(a_{\sigma(1)},\ldots,a_{\sigma(n-1)})\in
[[I_{\sigma(1)},I_{\sigma(2)}],\ldots,I_{\sigma(n-1)}]_S
$$
by the first induction and so
$$
\begin{array}{rcl}
\beta^n(a_{\sigma(1)},\ldots,a_{\sigma(n)})&=&[\beta^{n-1}(a_{\sigma(1)},\ldots,a_{\sigma(n-1)}),a_{\sigma(n)}]\\
&\in& [[[I_{\sigma(1)},I_{\sigma(2)}],\ldots,I_{\sigma(n-1)}]_S, I_{\sigma(n)}]\\
\end{array}
$$
with
$$
\begin{array}{rl}
& [[[I_{\sigma(1)},I_{\sigma(2)}],\ldots,I_{\sigma(n-1)}]_S, I_{\sigma(n)}]\\
=&\left[\sum_{\tau\in\Sigma_{n-1}}[[I_{\tau(\sigma(1))},I_{\tau(\sigma(2))}],\ldots,I_{\tau(\sigma(n-1))}],I_{\sigma(n)}\right]\\
=&\sum_{\tau\in\Sigma_{n-1}}[[[I_{\tau(\sigma(1))},I_{\tau(\sigma(2))}],\ldots,I_{\tau(\sigma(n-1))}], I_{\sigma(n)}]\\
\leq& [[I_1,I_2],\ldots,I_n]_S.\\
\end{array}
$$
Now suppose that the assertion holds for $q'=n-p<q$. By the first
induction, we have
$$
\beta^p(a_{\sigma(1)},\ldots,a_{\sigma(p)})\in
[[I_{\sigma(1)},I_{\sigma(2)}],\ldots,I_{\sigma(p)}]_S
$$
and
$$
\beta^{n-p}(a_{\sigma(p+1)},\ldots,a_{\sigma(n)})\in
[[I_{\sigma(p+1)},I_{\sigma(p+2)}],\ldots,I_{\sigma(n)}]_S.
$$
Thus
$$
\beta^{n}(a_{\sigma(1)},\ldots,a_{\sigma(n)})\in
\left[[[I_{\sigma(1)},I_{\sigma(2)}],\ldots,I_{\sigma(p)}]_S,
[[I_{\sigma(p+1)},I_{\sigma(p+2)}],\ldots,I_{\sigma(n)}]_S\right].
$$
Then
$$\beta^{n}(a_{\sigma(1)},\ldots,a_{\sigma(n)})\in\sum\limits_{
\begin{array}{c}
\tau\in\Sigma_p\\
\rho\in\Sigma_{n-p}\\
\end{array}}
\left[[[I_{\tau(\sigma(1))},\ldots,I_{\tau(\sigma(p))}],
[[I_{\rho(\sigma(p+1))},\ldots,I_{\rho(\sigma(n))}]\right],
$$
where $\Sigma_{n-p}$ acts on $\{\sigma(p+1),\ldots,\sigma(n)\}$. By
applying Jacobi identity, we have
$$
\begin{array}{rl}
&\left[[[I_{\tau(\sigma(1))},\ldots,I_{\tau(\sigma(p))}], [[I_{\rho(\sigma(p+1))},\ldots,I_{\rho(\sigma(n))}]\right]\\
=&\left[[[I_{\tau(\sigma(1))},\ldots,I_{\tau(\sigma(p))}], \left[[[I_{\rho(\sigma(p+1))},\ldots, I_{\rho(\sigma(n-1))}],I_{\rho(\sigma(n))}\right]\right]\\
\leq &\left[\left[[[I_{\tau(\sigma(1))},\ldots,I_{\tau(\sigma(p))}], [[I_{\rho(\sigma(p+1))},\ldots, I_{\rho(\sigma(n-1))}]\right],I_{\rho(\sigma(n))}\right]\\
&+\left[ \left[[[I_{\tau(\sigma(1))},\ldots,I_{\tau(\sigma(p))}],I_{\rho(\sigma(n))}\right], [[I_{\rho(\sigma(p+1))},\ldots, I_{\rho(\sigma(n-1))}]\right].\\
\end{array}
$$
Note that
$A=\left[\left[[[I_{\tau(\sigma(1))},\ldots,I_{\tau(\sigma(p))}],
[[I_{\rho(\sigma(p+1))},\ldots,
I_{\rho(\sigma(n-1))}]\right],I_{\rho(\sigma(n))}\right]$ is
generated by the elements of the form
$$
\left[\left[[[a'_{\tau(\sigma(1))},\ldots,a'_{\tau(\sigma(p))}],
[[a'_{\rho(\sigma(p+1))},\ldots,
a'_{\rho(\sigma(n-1))}]\right],a'_{\rho(\sigma(n))}\right]
$$
with $a'_j\in I_j$. By the second induction in case when $q=1$, the
above elements lie in $[[I_1,I_2],\ldots,I_n]_S$ and so
$$
A\leq [[I_1,I_2],\ldots,I_n]_S.
$$
Similarly, by the second induction hypothesis, $$\left[
\left[[[I_{\tau(\sigma(1))},\ldots,I_{\tau(\sigma(p))}],I_{\rho(\sigma(n))}\right],
[[I_{\rho(\sigma(p+1))},\ldots, I_{\rho(\sigma(n-1))}]\right]$$ is a
Lie ideal of $[[I_1,I_2],\ldots,I_n]_S$. It follows that
$$
T\leq [[I_1,I_2],\ldots,I_n]_S
$$
and so
$$
\beta^{n}(a_{\sigma(1)},\ldots,a_{\sigma(n)})\in
[[I_1,I_2],\ldots,I_n]_S.
$$
Both the first and second inductions are finished, hence the result.
\end{proof}

\begin{lem}\label{lemma3.3}
Let $L$ be a Lie algebra and let $I_1,\ldots,I_n$ be Lie ideals of
$L$. Let $(i_1,i_2,\ldots,i_p)$ be a sequence of integers with
$1\leq i_s\leq n$. Suppose that
$$\{i_1,i_2,\ldots,i_p\}=\{1,2,\ldots,n\}.$$ Then
$$
[[I_{i_1},I_{i_2}],\ldots,I_{i_p}]\leq [[I_1,I_2],\ldots,I_n]_S.
$$
\end{lem}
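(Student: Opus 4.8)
The plan is to establish the containment by a double induction: an outer induction on the number $n$ of ideals and, for each fixed $n$, an inner induction on the length $p$ of the sequence $(i_1,\ldots,i_p)$. Write $S_n:=[[I_1,I_2],\ldots,I_n]_S$. The only structural facts I would use are that a Lie bracket of two Lie ideals is again a Lie ideal, so that each summand $[[I_{\sigma(1)},I_{\sigma(2)}],\ldots,I_{\sigma(n)}]$ of $S_n$ is an ideal of $L$ and hence $S_n$ itself is an ideal of $L$; in particular $[S_n,I]\le[S_n,L]\le S_n$ for any subspace $I\le L$. Notably this argument is self-contained and does not require Lemma~\ref{lemma3.1}.

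The base cases are immediate. For $n=1$ every $i_s$ equals $1$, so $S_1=I_1$ and a left-normed bracket of copies of $I_1$ remains inside the ideal $I_1$. For $p=n$ the hypothesis $\{i_1,\ldots,i_p\}=\{1,\ldots,n\}$ forces $(i_1,\ldots,i_n)$ to be a permutation of $(1,\ldots,n)$, so $[[I_{i_1},\ldots],I_{i_n}]$ is by definition exactly one of the summands of $S_n$.

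For the inductive step I assume $p>n$, so some index is repeated, and I distinguish two cases according to the final entry $i_p$. If $i_p$ occurs at least twice, then $i_1,\ldots,i_{p-1}$ still cover $\{1,\ldots,n\}$ and have length $p-1\ge n$, so the inner induction hypothesis gives $[[I_{i_1},\ldots],I_{i_{p-1}}]\le S_n$; bracketing with $I_{i_p}$ then stays inside $S_n$ because $S_n$ is an ideal of $L$. If instead $i_p$ occurs exactly once, then $i_1,\ldots,i_{p-1}$ cover precisely the $n-1$ indices $\{1,\ldots,n\}\setminus\{i_p\}$ and have length $p-1\ge n-1$; applying the outer induction hypothesis to these $n-1$ ideals shows that the head $[[I_{i_1},\ldots],I_{i_{p-1}}]$ lies in their symmetric bracket sum $S'$. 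Bracketing $S'$ on the right with $I_{i_p}$ and expanding by bilinearity yields a sum of length-$n$ left-normed brackets, each indexed by a permutation of $(1,\ldots,n)$ ending in $i_p$, and hence a summand of $S_n$; therefore $[[I_{i_1},\ldots],I_{i_p}]\le S_n$.

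The point I expect to be the main obstacle is precisely the case where $i_p$ appears only once: a naive single induction on $p$ that simply deletes the last letter breaks down here, since the shortened sequence no longer covers $\{1,\ldots,n\}$. The resolution is to peel off this unique last index and invoke the statement for $n-1$ ideals, which is why the induction must also descend on $n$; recognizing the resulting length-$n$ brackets as genuine summands of $S_n$ then closes the argument.
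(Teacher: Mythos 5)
Your proof is correct and takes essentially the same route as the paper's: a double induction, outer on the number of ideals $n$ and inner on the sequence length $p$, with the identical case split according to whether the final index $i_p$ is repeated (use the inner hypothesis and the fact that $[[I_1,I_2],\ldots,I_n]_S$ is an ideal) or appears only once (peel it off and invoke the statement for the remaining $n-1$ ideals). The only cosmetic difference is at the very end of the second case: where you expand the bracket of the smaller symmetric sum with $I_{i_p}$ by bilinearity and recognize each term as a summand of $[[I_1,I_2],\ldots,I_n]_S$, the paper cites its Lemma~\ref{lemma3.2}, but the content is the same.
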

\begin{proof} We also apply double induction. The first induction is on  $n$.
 The assertion clearly holds for $n=1$. Suppose that the assertion holds for $n-1$ with $n>1$.
From the condition $\{i_1,i_2,\ldots,i_p\}=\{1,2,\ldots,n\}$,
we have $p\geq n$. When $p=n$, $(i_1,\ldots,i_n)$ is a permutation
of $(1,\ldots,n)$ and so $$[[I_{i_1},I_{i_2}],\ldots,I_{i_n}]\leq
[[I_1,I_2],\ldots,I_n]_S.$$ Suppose that
$$
[[I_{j_1},I_{j_2}],\ldots,I_{j_q}]\leq [[I_1,I_2],\ldots,I_n]_S
$$
for any sequence $(j_1,\ldots,j_q)$ with $q<p$ and
$\{j_1,\ldots,j_q\}=\{1,\ldots,n\}$. Let $(i_1,\ldots,i_p)$ be a
sequence with $\{i_1,\ldots,i_p\}=\{1,\ldots,n\}$.
If $i_p\in \{i_1,\ldots,i_{p-1}\}$, then
$\{i_1,\ldots,i_{p-1}\}=\{1,\ldots,n\}$ and so
$$
[[I_{i_1},I_{i_2}],\ldots,I_{i_{p-1}}]\leq [[I_1,I_2],\ldots,I_n]_S
$$
by the second induction hypothesis. It follows that
$$
[[I_{i_1},I_{i_2}],\ldots,I_{i_p}]\leq [[I_1,I_2],\ldots,I_n]_S.
$$
If $i_p\not\in \{i_1,\ldots,i_{p-1}\}$ , we may assume that $i_p=n$.
Then $$\{i_1,\ldots,i_{p-1}\}=\{1,\ldots,n-1\}$$ and so
$$
[[I_{i_1},I_{i_2}],\ldots,I_{i_{p-1}}]\leq
[[I_1,I_2],\ldots,I_{n-1}]_S
$$
by the first induction hypothesis. From Lemma~\ref{lemma3.2}, we
have
$$
[[I_{i_1},I_{i_2}],\ldots,I_{i_p}]\leq [[I_1,I_2],\ldots,I_n]_S.
$$
The inductions are  finished, hence the result holds.
\end{proof}

\begin{lem}\label{lemma3.4}
Let $L$ be a Lie algebra and let $I_1,\ldots,I_n$ be Lie ideal of
$L$ with $n\geq 2$. Let $(i_1,\ldots,i_p)$ and $(j_1,\ldots,j_q)$ be
sequences of integers such that
$\{i_1,\ldots,i_p\}\cup\{j_1,\ldots,j_q\}=\{1,2,\ldots,n\}$. Then
$$
[[[I_{i_1},I_{i_2}],\ldots,I_{i_p}],[[I_{j_1},I_{j_2}],\ldots,I_{j_q}]]\leq
[[I_1,I_2],\ldots,I_n]_S.
$$
\end{lem}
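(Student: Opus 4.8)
The plan is to prove the statement by induction on the length $q$ of the second left-normed bracket, formulated uniformly over all $n\ge 2$ and all finite families of ideals, so that the inductive hypothesis may be re-applied to proper subfamilies. Throughout I would write $U=[[I_{i_1},I_{i_2}],\ldots,I_{i_p}]$ and $V=[[I_{j_1},I_{j_2}],\ldots,I_{j_q}]$, and set $S_1=\{i_1,\ldots,i_p\}$, $S_2=\{j_1,\ldots,j_q\}$, so that $S_1\cup S_2=\{1,\ldots,n\}$. Since every iterated bracket of ideals is again an ideal, the symmetric bracket sum $[[I_1,I_2],\ldots,I_n]_S$ is an ideal of $L$; I would use this repeatedly to absorb an outer bracket as soon as the inner factor is known to lie in the sum.

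For the base case $q=1$ I would bound $[U,I_{j_1}]$ according to whether $j_1\in S_1$. If $j_1\in S_1$, then $S_1=\{1,\ldots,n\}$ and Lemma~\ref{lemma3.3} gives $U\le[[I_1,I_2],\ldots,I_n]_S$, so the bracket with $I_{j_1}$ stays inside this ideal. If $j_1\notin S_1$, then $S_1=\{1,\ldots,n\}\setminus\{j_1\}$, and Lemma~\ref{lemma3.3}, applied after relabelling to this family of $n-1$ ideals, shows that $U$ lies in the sum of the left-normed brackets indexed by the orderings of $S_1$; bracketing each such summand with $I_{j_1}$ produces a left-normed bracket whose index sequence is a permutation of $(1,\ldots,n)$, hence a summand of $[[I_1,I_2],\ldots,I_n]_S$. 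This reincorporation step is exactly the computation already carried out in the $q=1$ case of the proof of Lemma~\ref{lemma3.2}.

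For the inductive step $q\ge 2$, I would put $V'=[[I_{j_1},\ldots],I_{j_{q-1}}]$ and $S_2'=\{j_1,\ldots,j_{q-1}\}$, so that $V=[V',I_{j_q}]$. Applying Lemma~\ref{lemma3.1} with $A=U$, $B=V'$, $C=I_{j_q}$ gives the ideal containment
$$
[U,V]\ \le\ [[U,V'],I_{j_q}]\ +\ [[U,I_{j_q}],V'].
$$
In the second summand, $[U,I_{j_q}]$ is again a left-normed bracket, and its index set together with that of $V'$ is $S_1\cup\{j_q\}\cup S_2'=\{1,\ldots,n\}$; as $V'$ has length $q-1$, the inductive hypothesis places $[[U,I_{j_q}],V']$ in $[[I_1,I_2],\ldots,I_n]_S$. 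In the first summand the relevant index set is $S_1\cup S_2'$. If $j_q\in S_1\cup S_2'$ this equals $\{1,\ldots,n\}$, and the inductive hypothesis gives $[U,V']\le[[I_1,I_2],\ldots,I_n]_S$, after which the outer bracket with $I_{j_q}$ is absorbed by the ideal. Otherwise $S_1\cup S_2'=\{1,\ldots,n\}\setminus\{j_q\}$, and the inductive hypothesis applied to this $(n-1)$-element subfamily expresses $[U,V']$ as a sum of left-normed brackets over the orderings of $\{1,\ldots,n\}\setminus\{j_q\}$; bracketing with $I_{j_q}$ and invoking the definition of the symmetric sum, exactly as in the base case, finishes this summand.

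I expect the one genuinely delicate point to be this last situation, where $j_q$ occurs in neither $U$ nor $V'$: there the inductive hypothesis must be invoked for the strictly smaller ground set $\{1,\ldots,n\}\setminus\{j_q\}$, and the missing index $j_q$ then reattached through the defining sum of $[[I_1,I_2],\ldots,I_n]_S$ (with the convention that the symmetric sum over a single ideal is that ideal, which covers the degenerate case $n=2$). This is precisely why the induction has to be stated uniformly in $n$ and in the family of ideals rather than for a fixed family; once that is arranged, everything else is routine bookkeeping with Lemma~\ref{lemma3.1}, Lemma~\ref{lemma3.3}, and the definition of the symmetric bracket sum.
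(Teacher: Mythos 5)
Your proof is correct and follows essentially the same route as the paper's: the same decomposition of $[U,V]$ via Lemma~\ref{lemma3.1} with $A=U$, $B=V'$, $C=I_{j_q}$, the same use of Lemma~\ref{lemma3.3} in the base case, and the same reincorporation of a missing index $j_q$ through the defining sum of $[[I_1,I_2],\ldots,I_n]_S$. The only difference is bookkeeping: you run a single induction on $q$ stated uniformly over $n$ and the family of ideals, whereas the paper runs a double induction (outer on $n$, inner on $q$); your version is legitimate because every place where your argument shrinks the ground set from $n$ to $n-1$ elements it simultaneously drops the bracket length from $q$ to $q-1$, so the hypothesis at $q-1$ covers both invocations.
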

\begin{proof}
Again we use the double induction on $n$ and $q$ with $n\geq 2$
and $q\geq 1$. First we prove that the assertion holds for $n=2$. If
$\{i_1,\ldots,i_p\}=\{1,2\}$ or $\{j_1,\ldots,j_q\}=\{1,2\}$, we
have
$$
[[I_{i_1},I_{i_2}],\ldots,I_{i_p}]\leq [[I_1,I_2]_S \textrm{ or
}[[I_{j_1},I_{j_2}],\ldots,I_{j_q}]\leq [[I_1,I_2]_S$$ by
Lemma~\ref{lemma3.3}  and so  $$
[[[I_{i_1},I_{i_2}],\ldots,I_{i_p}],[[I_{j_1},I_{j_2}],\ldots,I_{j_q}]]\leq
[[I_1,I_2]_S.
$$
Otherwise, $i_1=\cdots=i_p$ and $j_1=\cdots=j_q $, since
$\{i_1,\ldots,i_p\}\cup\{j_1,\ldots,j_q\}=\{1,2\}$, we may assume
that  $i_1=\cdots=i_p=1, j_1=\cdots=j_q =2$, then
$$[[I_{i_1},I_{i_2}],\ldots,I_{i_p}]\leq I_1 \textrm{ and
}[[I_{j_1},I_{j_2}],\ldots,I_{j_q}]\leq I_2$$ and so
 $$
[[[I_{i_1},I_{i_2}],\ldots,I_{i_p}],[[I_{j_1},I_{j_2}],\ldots,I_{j_q}]]\leq
[[I_1,I_2]_S.
$$
Suppose the assertion holds for $n-1$, that is $$
[[[I_{i_1},I_{i_2}],\ldots,I_{i_p}],[[I_{j_1},I_{j_2}],\ldots,I_{j_q}]]\leq
[[I_1,I_2],\ldots,I_{n-1}]_S.
$$ when $\{i_1,\ldots,i_p\}\cup\{j_1,\ldots,j_q\}=\{1,2,\ldots,n-1\}$. We shall use the second induction on $q$ to prove that the assertion holds for $n$.
If $q=1$, the assertion follows by Lemma~\ref{lemma3.3}. Suppose
that the assertion holds for $q-1$. By Lemma~\ref{lemma3.1},
$[[[I_{i_1},I_{i_2}],\ldots,I_{i_p}],[[I_{j_1},I_{j_2}],\ldots,I_{j_q}]]$
is a Lie ideal of the sum
$$
[[[[I_{i_1},\ldots,I_{i_p}],[[I_{j_1},\ldots,I_{j_{q-1}}]],I_{j_q}]+
[[[[I_{i_1},\ldots,I_{i_p}],I_{j_q}],[[I_{j_1},\ldots,I_{j_{q-1}}]].
$$
By the second induction we have $$
[[[[I_{i_1},\ldots,I_{i_p}],I_{j_q}],[[I_{j_1},\ldots,I_{j_{q-1}}]]\leq
[[I_1,I_2],\ldots,I_n]_S.$$
If $\{i_1,\ldots,i_p\}\cup\{j_1,\ldots,j_{q-1}\}=\{1,2,\ldots,n\}$,
by the second induction
$$[[[I_{i_1},\ldots,I_{i_p}],[[I_{j_1},\ldots,I_{j_{q-1}}]]\leq
[[I_1,I_2],\ldots,I_n]_S$$ and hence
$$[[[[I_{i_1},\ldots,I_{i_p}],[[I_{j_1},\ldots,I_{j_{q-1}}]],I_{j_q}]\leq
[[I_1,I_2],\ldots,I_n]_S. $$
If $\{i_1,\ldots,i_p\}\cup\{j_1,\ldots,j_{q-1}\}\neq
\{1,2,\ldots,n\}$, we may assume that
$$\{i_1,\ldots,i_p\}\cup\{j_1,\ldots,j_{q-1}\}=\{1,2,\ldots,n-1\}$$
and $j_q=n$. By the first induction,
$$[[[I_{i_1},\ldots,I_{i_p}],[[I_{j_1},\ldots,I_{j_{q-1}}]]\leq
[[I_1,I_2],\ldots,I_{n-1}]_S.$$ Then
$$[[[[I_{i_1},\ldots,I_{i_p}],[[I_{j_1},\ldots,I_{j_{q-1}}]],I_{j_q}]\leq
[[I_1,I_2],\ldots,I_n]_S.$$
It follows that
$[[[I_{i_1},I_{i_2}],\ldots,I_{i_p}],[[I_{j_1},I_{j_2}],\ldots,I_{j_q}]]\leq
[[I_1,I_2],\ldots,I_n]_S$. The double induction is finished, hence
the result.
\end{proof}

{\it Proof of Theorem~\ref{theorem3.1}.} Clearly
$[[I_1,I_2],\ldots,I_n]_S\leq [[I_1,I_2,\ldots, I_n]]$. We prove by
induction on $n$ that $$[[I_1,I_2,\ldots,I_n]]\leq
[[I_1,I_2],\ldots,I_n]_S.$$ The assertion holds for $n=1$.
We now make the first induction hypothesi that for all
$1\leq s<n$ and for any Lie ideals $I_1,\ldots,I_s$ of $L$
\begin{equation}
[[I_1,I_2,\ldots, I_s]]\leq [[I_1,I_2],\ldots, I_s]_S.
\label{induction1}
\end{equation}
Let $I_1,\ldots, I_n$ be arbitrary Lie ideals of $L$. By definition,
$[[I_1,I_2,\ldots,I_n]]$ is generated by all commutators
$$\beta^t(a_{i_1},\ldots,a_{i_t})$$ of weight $t$ such that
$\{i_1,i_2,\ldots,i_t\}=\{1,2,\ldots,n\}$ with $a_j\in I_j$. To
prove that each generator $\beta^t(a_{i_1},\ldots,a_{i_t})\in
[[I_1,I_2],\ldots,I_n]_S$, we start the second induction on the
weight $t$ of $\beta^t$ with $t\geq n$. If $t=n$, then
$(i_1,\ldots,i_n)$ is a permutation of $(1,\ldots,n)$ and so the
assertion holds by Lemma~\ref{lemma3.2}.
Let $n\leq k$ and let
$$
\beta^k(a'_{i_1},\ldots,a'_{i_k})
$$
be any bracket arrangement of weight $k$ such that
\begin{enumerate}
\item[1)] $1\leq i_s\leq n$;
\item[2)] $\{i_1,\ldots,i_k\}=\{1,\ldots,n\}$;
\item[3)] $a'_j\in I_j$;
\end{enumerate}
 Now assume that the second hypothesis holds:
\begin{equation}
\beta^k(a'_{i_1},\ldots,a'_{i_k})\in
[[I_1,I_2],\ldots,I_n]_S
\label{induction2}
\end{equation}
for all $k$ such that $n\leq k<t$.

Let $\beta^t(a_{i_1},\ldots,a_{i_t})$ be any bracket arrangement of
weight $t$ with $\{i_1,\ldots,i_t\}=\{1,\ldots,n\}$ and $a_j\in I_j$
for $1\leq j\leq n$. From the definition of bracket arrangement, we
have
$$
\beta^t(a_{i_1},\ldots,a_{i_t})=[\beta^p(a_{i_1},\ldots,a_{i_p}),
\beta^{t-p}(a_{i_{p+1}},\ldots,a_{i_t})]
$$
for some bracket arrangements $\beta^p$ and $\beta^{t-p}$ of weight
$p$ and $t-p$, respectively, with $1\leq p\leq n-1$. Let
$$
A=\{i_1,\ldots,i_p\}\textrm{ and } B=\{i_{p+1},\ldots,i_t\}.
$$
Then both $A$ and $B$ are the subsets of $\{1,\ldots,n\}$ with
$A\cup B=\{1,\ldots,n\}$.

Suppose that the cardinality $|A|=n$ or $|B|=n$. We may assume that
$|A|=n$. By hypothesis~\ref{induction2},
$$
\beta^p(a_{i_1},\ldots,a_{i_p})\in [[I_1,I_2],\ldots,I_n]_S.
$$
Since $[[I_1,I_2],\ldots,I_n]_S$ is a Lie ideal of $L$, we have
$$
\beta^t(a_{i_1},\ldots,a_{i_t})=[\beta^p(a_{i_1},\ldots,a_{i_p}),
\beta^{t-p}(a_{i_{p+1}},\ldots,a_{i_t})]\in
[[I_1,I_2],\ldots,I_n]_S.
$$
This proves the result in this case.

Suppose that $|A|<n$ and $|B|<n$. Let $A=\{l_1,\ldots,l_a\}$ with
$1\leq l_1<l_2<\cdots<l_a\leq n$ and $1\leq a<n$, and let
$B=\{k_1,\ldots,k_b\}$ with $1\leq k_1<k_2<\cdots< k_b$ and $1\leq
b<n$. Observe that
$$
\beta^p(a_{i_1},\ldots,a_{i_p})\in
[[I_{l_1},I_{l_2},\ldots,I_{l_a}]].
$$
By hypothesis~\ref{induction1},
$$
[[I_{l_1},I_{l_2},\ldots,I_{l_a}]]=[[I_{l_1},I_{l_2}],\ldots,I_{l_a}]_S.
$$
Thus
$$
\beta^p(a_{i_1},\ldots,a_{i_p})\in
[[I_{l_1},I_{l_2}],\ldots,I_{l_a}]_S.
$$
Similarly
$$
\beta^{t-p}(a_{i_{p+1}},\ldots,a_{i_t})\in
[[I_{k_1},I_{k_2}],\ldots,I_{k_b}]_S.
$$
It follows that
$$\beta^t(a_{i_1},\ldots,a_{i_t})\in\left[[[I_{l_1},I_{l_2}],\ldots,I_{l_a}]_S,
[[I_{k_1},I_{k_2}],\ldots,I_{k_b}]_S\right].
$$
From Lemma~\ref{lemma3.4}, we have
$$
\left[[[I_{l_{\sigma(1)}},I_{l_{\sigma(2)}}],\ldots,I_{l_{\sigma(a)}}],
[[I_{k_{\tau(1)}},I_{k_{\tau(2)}}],\ldots,I_{k_{\tau(b)}}]\right]\leq
[[I_1,I_2],\ldots,I_n]_S
$$
for all $\sigma\in\Sigma_a$ and $\tau\in\Sigma_b$ because
$\{l_1,\ldots,l_a\}\cup \{k_1,\ldots,k_b\}=A\cup
B=\{1,2,\ldots,n\}$. It follows that
$$
\left[[[I_{l_1},I_{l_2}],\ldots,I_{l_a}]_S,
[[I_{k_1},I_{k_2}],\ldots,I_{k_b}]_S\right]\leq
[[I_1,I_2],\ldots,I_n]_S.
$$
Thus
$$
\beta^t(a_{i_1},\ldots,a_{i_t})\in [[I_1,I_2],\ldots,I_n]_S.
$$
The inductions are finished, hence Theorem~\ref{theorem3.1}.

Let us denote the ideal
$$L[A_{k,n},  [\cdots [A_{k,n}, A_{j_1,n}], \ldots, A_{j_m, n} ] \ | \
j_i\not= k, n; \ j_i\le n-2, i\le m; \ m\ge 1]
$$
by $I_k$. Then we have the following theorem.
\begin{thm}\label{proposition3.6} The Lie subalgebra
$\LPBn$ and the symmetric bracket sum $[[I_1,I_2],\ldots, I_{n-1}]_S$ are
equal as subalgebras in  $L(P_n)$:
\begin{equation*}
\LPBn = [[I_1,I_2],\ldots, I_{n-1}]_S.
\end{equation*}
\end{thm}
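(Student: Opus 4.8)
The plan is to prove the two inclusions separately. First I would invoke Theorem~\ref{theorem3.1} to replace the symmetric bracket sum by the fat bracket sum, so that $[[I_1,I_2],\ldots,I_{n-1}]_S = [[I_1,I_2,\ldots,I_{n-1}]]$ is the Lie ideal of $L(P_n)$ generated by the brackets $\beta^t(a_{i_1},\ldots,a_{i_t})$ with $\{i_1,\ldots,i_t\}=\{1,\ldots,n-1\}$ and $a_{i_s}\in I_{i_s}$. The two structural inputs are Proposition~\ref{Proposition2}, giving $\LPBn=\bigcap_{i=1}^n\Ker(d_i)$, and Theorem~\ref{theorem8}, giving that $\LPBn$ is freely generated by $\mathcal{K}(n)_1$. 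I also record the elementary fact, read off from Proposition~\ref{proposition5}, that $A_{k,n}\in I_k$ and that $I_k\subseteq \Ker(d_k)\cap\Ker(d_n)$: every generator of $I_k$ lies in $\Ker(d_n)=L[A_{1,n},\ldots,A_{n-1,n}]$, and since each such generator has $A_{k,n}$ as its innermost letter while $d_k(A_{k,n})=0$, it is annihilated by the homomorphism $d_k$.

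For the inclusion $[[I_1,I_2],\ldots,I_{n-1}]_S\subseteq\LPBn$, since $\LPBn$ is a Lie ideal of $L(P_n)$ (Proposition~\ref{proposition1}), it suffices to place each fat-sum generator $\beta=\beta^t(a_{i_1},\ldots,a_{i_t})$ inside $\bigcap_{l=1}^n\Ker(d_l)$. Fix $l$. If $l=n$, then every entry $a_{i_s}$ lies in $\Ker(d_n)$, so $d_n(\beta)=0$. If $1\le l\le n-1$, then $l$ occurs among $i_1,\ldots,i_t$, so some entry $a_{i_s}\in I_l\subseteq\Ker(d_l)$; applying the homomorphism $d_l$ produces a bracket with a zero entry, whence $d_l(\beta)=0$. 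Thus $\beta\in\LPBn$, and the ideal these $\beta$ generate lies in $\LPBn$.

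For the reverse inclusion I would show that each free generator $W\in\mathcal{K}(n)_1$ is itself a defining generator of the fat bracket sum. The combinatorial heart is the claim that every $W\in\mathcal{K}(n)_1$ is a Lie monomial on $A_{1,n},\ldots,A_{n-1,n}$ in which each letter $A_{k,n}$, $1\le k\le n-1$, occurs at least once. This I would prove by downward induction on $k$, showing that every element of $\mathcal{K}(n)_k$ involves each of $A_{k,n},\ldots,A_{n-1,n}$; the inductive step is immediate from the construction, where one brackets an element $W'\in\mathcal{K}(n)_{k+1}\smallsetminus\calA_k$ (which contains $A_{k,n}$, and by induction contains $A_{k+1,n},\ldots,A_{n-1,n}$) with further monomials from $\calA_k$. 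Granting this, writing $W=\beta^t(A_{i_1,n},\ldots,A_{i_t,n})$ for the underlying bracket arrangement gives $\{i_1,\ldots,i_t\}=\{1,\ldots,n-1\}$ with each $A_{i_s,n}\in I_{i_s}$, so $W$ is a fat-sum generator and hence lies in $[[I_1,I_2],\ldots,I_{n-1}]_S$ by Theorem~\ref{theorem3.1}. Since the $W$ generate $\LPBn$ and the right-hand side is a subalgebra, $\LPBn\subseteq[[I_1,I_2],\ldots,I_{n-1}]_S$, and the two inclusions give equality.

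The genuinely load-bearing step is the combinatorial claim that every member of $\mathcal{K}(n)_1$ exhausts the $n-1$ letters; once this is in hand, both inclusions are bookkeeping built on the homomorphism property of the faces $d_l$ and the vanishing of a bracket with a zero entry. The one point I would verify carefully at the outset is that each $I_k$ is indeed a Lie ideal of $L(P_n)$, so that the symmetric/fat bracket sum and Theorem~\ref{theorem3.1} apply verbatim; this is where the index range in the definition of $I_k$ must be reconciled with the kernel description $\Ker(d_k)\cap\Ker(d_n)$ of Proposition~\ref{proposition5}. I note that the argument itself only uses $A_{k,n}\in I_k$ and $I_k\subseteq\Ker(d_k)\cap\Ker(d_n)$, so the conclusion is robust under this reconciliation.
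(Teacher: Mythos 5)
Your proposal is correct and is essentially the paper's own argument: the same two inclusions, resting on Theorem~\ref{theorem3.1}, Proposition~\ref{Proposition2} and Theorem~\ref{theorem8}, with your write-up merely supplying the details the paper declares evident (the vanishing of each fat-sum generator under every $d_l$, and the downward induction showing every element of $\mathcal{K}(n)_1$ involves all of $A_{1,n},\ldots,A_{n-1,n}$). Your closing caveat is also well placed: the bound $j_i\le n-2$ in the paper's definition of $I_k$ should be read as $j_i\le n-1$, so that $I_k=\Ker(d_n)\cap\Ker(d_k)$ is a Lie ideal as in Proposition~\ref{proposition5}, and, as you observe, the proof uses only $A_{k,n}\in I_k$, $I_k\subseteq\Ker(d_k)\cap\Ker(d_n)$, and ideality, so it is unaffected by this reconciliation.
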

\begin{proof}
It is evident that the symmetric bracket sum $[[I_1,I_2],\ldots,
I_{n-1}]_S$ lies in the kernels of all $d_i$. On the other hand,
from Theorem~\ref{theorem8}, $\LPBn$ is given as ``\textit{fat Lie
product}'' of $I_1,\ldots, I_{n-1}$ because each element in
$\mathcal{K}(n)_1$ is a Lie monomial containing each of
$A_{1,n},\ldots,A_{n-1,n}$. We know that $\mathcal{K}(n)_1\subseteq
[[I_1,\ldots, I_{n-1}]]=[[I_1,I_2],\ldots, I_{n-1}]_S$. Thus $\LPBn$ is
contained in the symmetric bracket sum $[[I_1,I_2],\ldots, I_{n-1}]_S$.
\end{proof}


\section{The Rank of $L^P_q(\Brun_n)$}
Observe that the Lie algebra $L(P)$ is of finite type in the sense
that each homogeneous component $L_k(P_n)$ is a free abelian group
of finite rank. Thus the subgroup
$$
\LPBn\cap L_k(P_n)
$$
is a free abelian group of finite rank. The purpose of this section
to give a formula on the rank of $L^P_q(\Brun_n)$
\subsection{A decomposition formula on bi-$\Delta$-groups}

By the definition of bi-$\Delta$-groups and the
face and co-face operation on $\mathbb{P}=\{\mathbb{P}_{n}\}_{n\geq
0}$, we have the following lemma.
\begin{lem}\label{lemma 3.4}
 For every $q\geq 0$, $L_q(\mathbb{P})=\{L_q(\mathbb{P}_n)\}_{n\geq 0}$ is a
bi-$\Delta$-group.\hfill $\Box$
\end{lem}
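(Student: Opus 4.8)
The plan is to exhibit $L_q(-)=\Gamma_q(-)/\Gamma_{q+1}(-)$ as a functor from the category of groups to the category of abelian groups, and then to obtain the bi-$\Delta$-group structure on $L_q(\mathbb{P})$ simply by applying this functor to the bi-$\Delta$-group $\mathbb{P}=\{\mathbb{P}_n\}_{n\geq 0}$ described above. The point is that the defining identities (1)--(3) of a bi-$\Delta$-group are identities between composites of group homomorphisms, and any functor preserves composites and identities; so once the functoriality of $L_q$ is in place, those identities transport verbatim to the induced maps.

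First I would record the elementary fact that every group homomorphism $f\colon G\to H$ respects the lower central series, i.e. $f(\Gamma_q(G))\subseteq \Gamma_q(H)$ for all $q\geq 1$. This follows by induction on $q$: it is trivial for $q=1$, and if $f(\Gamma_q(G))\subseteq\Gamma_q(H)$ then
$$
f(\Gamma_{q+1}(G))=f([\Gamma_q(G),G])\subseteq[f(\Gamma_q(G)),f(G)]\subseteq[\Gamma_q(H),H]=\Gamma_{q+1}(H),
$$
using that $f$ carries commutators to commutators. Consequently $f$ descends to a well-defined homomorphism of abelian groups
$$
f_*\colon L_q(G)=\Gamma_q(G)/\Gamma_{q+1}(G)\longrightarrow \Gamma_q(H)/\Gamma_{q+1}(H)=L_q(H),
$$
and the assignments $f\mapsto f_*$ plainly satisfy $(\id_G)_*=\id_{L_q(G)}$ and $(g\circ f)_*=g_*\circ f_*$. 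This is exactly the statement that $L_q$ is a functor valued in abelian groups.

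Next I would apply $L_q$ to the faces and co-faces of $\mathbb{P}$. Since each $\mathbbm{d}_i\colon\mathbb{P}_n\to\mathbb{P}_{n-1}$ and each $\mathbbm{d}^i\colon\mathbb{P}_{n-1}\to\mathbb{P}_n$ is a group homomorphism, the preceding step produces abelian-group homomorphisms $(\mathbbm{d}_i)_*\colon L_q(\mathbb{P}_n)\to L_q(\mathbb{P}_{n-1})$ and $(\mathbbm{d}^i)_*\colon L_q(\mathbb{P}_{n-1})\to L_q(\mathbb{P}_n)$, which I take as the faces and co-faces of $L_q(\mathbb{P})$. Applying $L_q$ to the three families of identities satisfied by the $\mathbbm{d}_i$ and $\mathbbm{d}^i$ and using functoriality — so that (1) becomes $(\mathbbm{d}_j)_*(\mathbbm{d}_i)_*=(\mathbbm{d}_i)_*(\mathbbm{d}_{j+1})_*$, and likewise for (2) and (3), with $(\id)_*=\id$ handling the middle case $j=i$ of (3) — shows that the induced maps satisfy precisely the bi-$\Delta$-group identities. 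Hence $\{L_q(\mathbb{P}_n)\}_{n\geq 0}$ is a bi-$\Delta$-group, all of whose faces and co-faces are (abelian) group homomorphisms.

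There is no serious obstacle here: the only thing that must be checked with any care is the functoriality of $L_q$, and this reduces to the stability of the lower central series under homomorphisms established in the first step. Everything else is a formal consequence of the fact that a functor transports commutative diagrams to commutative diagrams, so the bi-$\Delta$ relations are inherited automatically from $\mathbb{P}$.
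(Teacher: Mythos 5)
Your proof is correct and matches the paper's intent: the paper states this lemma without proof, treating it as immediate from the definitions, and your functoriality argument (homomorphisms preserve the lower central series, hence $L_q$ is a functor, hence the bi-$\Delta$ identities transport to the induced maps) is exactly the justification being taken for granted. The only cosmetic remark is that $L_q$ is defined for $q\geq 1$ (the lemma's ``$q\geq 0$'' is a slip in the paper, not in your argument).
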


Let $\mathcal{G}=\{G_n\}_{n\geq0}$ be a bi-$\Delta$-group.  Define
$$
\mathcal{Z}_n(\mathcal{G})=\bigcap_{i=0}^n\mathrm{Ker}(d_i\colon G_n\to G_{n-1}).
$$
The following statement on bi-$\Delta$-groups is proved in~\cite[Proposition 1.2.9]{Wu4}.
\begin{thm}[Decomposition Theorem of bi-$\Delta$-groups]
Let $\calG=\{G_n\}_{n\geq0}$ be a bi-$\Delta$-group. Then $G_n$ is
the (iterated) semi-direct product the subgroups
$$
d^{i_k}d^{i_{k-1}}\cdots d^{i_1} (\calZ_{n-k}(\calG)),
$$
$0\leq i_1<\cdots<i_k\leq n$, $0\leq k\leq n$, with lexicographic
from right.\hfill $\Box$
\end{thm}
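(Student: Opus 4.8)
The plan is to prove the decomposition by a double induction: an outer induction on the dimension $n$ and, for fixed $n$, an inner induction that peels off the face maps $d_n,d_{n-1},\ldots,d_0$ one at a time. To make the recursion close, I would prove the slightly stronger statement $P(n)$: for every $0\le r\le n+1$ the subgroup $M_r:=\bigcap_{j=r}^{n}\Ker(d_j\colon G_n\to G_{n-1})$ is the iterated semidirect product (lexicographic from the right) of exactly those factors $d^{i_k}\cdots d^{i_1}(\calZ_{n-k})$ whose top index satisfies $i_k\le r-1$, with $k=0$ contributing $\calZ_n$. Taking $r=n+1$ recovers $G_n$ and hence the theorem, while taking $r=0$ gives the tautology $M_0=\calZ_n$, which anchors the inner induction.

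The engine of the argument is a single stagewise splitting: for each $r$,
\[
M_{r+1}=M_r\rtimes d^r(M'_r),\qquad M'_r:=\bigcap_{j=r}^{n-1}\Ker(d_j\colon G_{n-1}\to G_{n-2}).
\]
To establish it I would use only the defining identities of a bi-$\Delta$-group. First, $d_rd^r=\id$ shows that $d^r$ is an injective section of $d_r$. Second, the identity $d_id^r=d^rd_{i-1}$ for $i>r$ shows that $d^r$ carries $M'_r$ into $M_{r+1}=\bigcap_{i>r}\Ker(d_i)$. Third, the identity $d_jd_r=d_rd_{j+1}$ for $j\ge r$ shows that $d_r$ carries $M_{r+1}$ into $M'_r$, so that $d_r|_{M_{r+1}}\colon M_{r+1}\to M'_r$ is a surjective homomorphism split by $d^r$, with kernel $M_{r+1}\cap\Ker(d_r)=M_r$. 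This exhibits $M_r$ as a normal subgroup of $M_{r+1}$ with complement $d^r(M'_r)$, i.e.\ the claimed split extension.

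With the splitting in hand the inductions run as follows. By the inner induction $M_r$ is already the product of the factors with top index $\le r-1$. For the new factor $d^r(M'_r)$ I would invoke the outer hypothesis $P(n-1)$, which describes $M'_r$ as the product of the factors $d^{i_{k-1}}\cdots d^{i_1}(\calZ_{(n-1)-(k-1)})$ with top index $\le r-1$; applying $d^r$ on the left, and noting that the superscript string $i_1<\cdots<i_{k-1}<r$ is automatically increasing, turns these into precisely the factors $d^rd^{i_{k-1}}\cdots d^{i_1}(\calZ_{n-k})$ of $G_n$ with top index equal to $r$. Together with $M_r$ this yields all factors with top index $\le r$, which is $P(n)$ for parameter $r+1$. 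The coface identity $d^jd^i=d^{i+1}d^j$ for $j\le i$ is what guarantees that these composites are the canonical increasing ones and that the orders produced by the two inductions assemble into a single lexicographic-from-the-right order.

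The main obstacle is not any individual computation but the bookkeeping needed to close the recursion: one must phrase the strengthened hypothesis $P(n)$ with the auxiliary groups $M_r$ and $M'_r$ so that the complement $d^r(M'_r)$ produced in dimension $n$ is governed by the lower-dimensional decomposition of $M'_r$, and one must check that the subnormal chain $\calZ_n=M_0\triangleleft M_1\triangleleft\cdots\triangleleft M_{n+1}=G_n$ together with the recursively ordered complements really yields the asserted ordering. All the genuinely group-theoretic content is concentrated in the stagewise splitting, whose verification is a direct application of the face--coface identities; everything else is the organization of indices.
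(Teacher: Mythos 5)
Your proof is correct, but there is nothing in the paper itself to compare it against: the paper does not prove this theorem, it quotes it from \cite[Proposition 1.2.9]{Wu4} (hence the $\Box$ terminating the statement, and the sentence preceding it). Your argument therefore supplies exactly the content the paper delegates to that reference, and it is the standard proof of such decomposition theorems, in the same spirit as Wu's: filter $G_n$ by $M_r=\bigcap_{j=r}^{n}\Ker(d_j)$, use the identity $d_rd^r=\id$ to make $d^r$ a section of $d_r$, use the mixed identity $d_jd^r=d^rd_{j-1}$ (case $j>r$) to get $d^r(M'_r)\subseteq M_{r+1}$, and the face identity $d_jd_r=d_rd_{j+1}$ ($j\geq r$) to get $d_r(M_{r+1})\subseteq M'_r$, so that $1\to M_r\to M_{r+1}\to M'_r\to 1$ is split exact and $M_{r+1}=M_r\rtimes d^r(M'_r)$; the double induction then closes because $M'_r$ is precisely the corresponding filtration stage one dimension down, and injectivity of $d^r$ transports its decomposition into the factors of $G_n$ with top coface index $r$. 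I checked all three uses of the bi-$\Delta$ identities, the identification $\Ker(d_r|_{M_{r+1}})=M_r$, and the reassembly of $N\rtimes Q$ with an iterated decomposition of $Q$ into a single iterated semidirect product in lexicographic-from-the-right order; all of this is sound.

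Two cosmetic points. First, your appeal to the coface identity $d^jd^i=d^{i+1}d^j$ is not actually needed where you invoke it: since $P(n-1)$ produces strings $j_1<\cdots<j_{k-1}\leq r-1$, the composite $d^rd^{j_{k-1}}\cdots d^{j_1}$ is already in the canonical increasing form $d^{i_k}\cdots d^{i_1}$ with $i_k=r$, so no reordering ever occurs. Second, the base case $n=0$ of the outer induction should be stated explicitly with the convention $\calZ_0(\calG)=G_0$ (there being no meaningful face maps out of $G_0$), which makes $P(0)$ trivially true and anchors the recursion.
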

\begin{cor}\label{Corollary4.3}
Let $\mathcal G=\{G_n\}_{n\geq0}$ be a bi-$\Delta$-group such that each $G_n$ is an abelian group. Then there is direct sum decomposition
$$
G_n=\bigoplus_{
\begin{array}{c}
0\leq i_1<\cdots<i_k\leq n\\
0\leq k\leq n\\
\end{array}}
d^{i_k}d^{i_{k-1}}\cdots d^{i_1} (\calZ_{n-k}(\calG))
$$
for each $n$.\hfill $\Box$
\end{cor}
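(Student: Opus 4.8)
The plan is to deduce this corollary directly from the Decomposition Theorem of bi-$\Delta$-groups stated immediately above, by observing that for abelian groups an iterated (internal) semi-direct product is nothing but an internal direct sum. First I would fix the notation $H_{(i_1,\ldots,i_k)}=d^{i_k}d^{i_{k-1}}\cdots d^{i_1}(\calZ_{n-k}(\calG))$ for each strictly increasing multi-index $0\leq i_1<\cdots<i_k\leq n$ with $0\leq k\leq n$. By the Decomposition Theorem, $G_n$ is the iterated semi-direct product of the subgroups $H_{(i_1,\ldots,i_k)}$ taken with lexicographic order from the right. In particular, each such $H_{(i_1,\ldots,i_k)}$ is a subgroup of $G_n$, the subgroups together generate $G_n$, and every element of $G_n$ admits a \emph{unique} factorization as an ordered product of elements drawn one from each factor.

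Next I would invoke the hypothesis that $G_n$ is abelian. In an abelian group every subgroup is normal and all conjugation is trivial, so the twisting action at each stage of the iterated semi-direct product is trivial; hence the iterated semi-direct product coincides with the iterated direct product, and the right-to-left ordering becomes immaterial. Concretely, the two conditions characterizing an internal direct sum both hold: the subgroups $H_{(i_1,\ldots,i_k)}$ generate $G_n$ (this is exactly the surjectivity of the product map furnished by the Decomposition Theorem), and each factor meets the subgroup generated by the remaining factors trivially (this is the uniqueness of the semi-direct factorization, combined with triviality of the action). Since in the category of abelian groups a finite internal direct product is the same thing as an internal direct sum, I would conclude
$$
G_n=\bigoplus_{
\begin{array}{c}
0\leq i_1<\cdots<i_k\leq n\\
0\leq k\leq n\\
\end{array}}
d^{i_k}d^{i_{k-1}}\cdots d^{i_1}(\calZ_{n-k}(\calG)),
$$
as required.

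The only point requiring any care is the passage from ``iterated semi-direct product'' to ``direct sum'': one must check that the uniqueness of the semi-direct factorization genuinely yields the trivial-intersection condition of the internal direct sum. This is immediate once the conjugation actions are seen to be trivial, which is automatic in the abelian setting, so there is no real obstacle here. Thus the corollary is a purely formal consequence of the Decomposition Theorem together with the abelianness hypothesis.
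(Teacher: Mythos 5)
Your proof is correct and is exactly the argument the paper intends: the corollary is stated with only a $\Box$ because it follows immediately from the Decomposition Theorem once one notes that in an abelian group the conjugation actions are trivial, so the iterated semi-direct product of the subgroups $d^{i_k}\cdots d^{i_1}(\calZ_{n-k}(\calG))$ is an internal direct sum. Your extra care about uniqueness of factorization yielding the trivial-intersection condition is a fair elaboration of the same reasoning, not a different route.
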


\subsection{The Rank of $L^P_q(\Brun_n)$}

Let $\mathcal{G}=L_q(\mathbb{P})$. Then
$\mathcal{Z}_n(L_q(\mathbb{P})=L_q^P(\Brun_{n+1})$ by Proposition~\ref{Proposition2}.  Let
$d^i=\mathbbm{d}^{i-1}:\mathbb{P}_{n-1}=P_{n}\rightarrow \mathbb{P}_{n}=P_{n+1}$ is obtained
by adding a trivial $i$st string in front of the other strings
$(i=1,2,\cdots, n)$. By Corollary~\ref{Corollary4.3}, we have the following decomposition.

\begin{prop}
There is a decomposition
$$
L_q(P_{n})=\bigoplus_{
\begin{array}{c}
1\leq i_1<\cdots<i_k\leq n\\
0\leq k\leq n-1\\
\end{array}}
d^{i_k}d^{i_{k-1}}\cdots d^{i_1} (L^P_q(\Brun_{n-k}))
$$
for each $n$ and $q$. \hfill $\Box$
\end{prop}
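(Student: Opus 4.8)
The plan is to read off the statement as a direct application of Corollary~\ref{Corollary4.3} to the bi-$\Delta$-group $\mathcal{G}=L_q(\mathbb{P})$, once its hypotheses are checked and the bookkeeping forced by the degree shift $\mathbb{P}_m=P_{m+1}$ and the relabeling $d^i=\mathbbm{d}^{i-1}$ is carried out.

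First I would record the two inputs. By Lemma~\ref{lemma 3.4}, $L_q(\mathbb{P})=\{L_q(\mathbb{P}_m)\}_{m\geq 0}$ is a bi-$\Delta$-group, with faces and co-faces induced from those on $\mathbb{P}$. Moreover each term $L_q(\mathbb{P}_m)=L_q(P_{m+1})=\Gamma_q(P_{m+1})/\Gamma_{q+1}(P_{m+1})$ is an abelian group, being a single graded component of the associated Lie algebra. Hence the abelian Decomposition Theorem, namely Corollary~\ref{Corollary4.3}, applies verbatim to $\mathcal{G}=L_q(\mathbb{P})$.

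Next I would specialize it at level $m=n-1$, so that $G_{n-1}=L_q(\mathbb{P}_{n-1})=L_q(P_n)$, obtaining
$$
L_q(P_n)=\bigoplus_{\substack{0\leq i_1<\cdots<i_k\leq n-1\\ 0\leq k\leq n-1}} \mathbbm{d}^{i_k}\mathbbm{d}^{i_{k-1}}\cdots\mathbbm{d}^{i_1}\bigl(\mathcal{Z}_{(n-1)-k}(L_q(\mathbb{P}))\bigr).
$$
The summands are then rewritten by the two translations already set up in this subsection. On the one hand $\mathcal{Z}_{(n-1)-k}(L_q(\mathbb{P}))=L^P_q(\Brun_{n-k})$ by the identification $\mathcal{Z}_m(L_q(\mathbb{P}))=L^P_q(\Brun_{m+1})$ recorded above, which is a degreewise instance of Proposition~\ref{Proposition2}. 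On the other hand, writing $\mathbbm{d}^{i_j}=d^{i_j+1}$ and relabeling the summation index accordingly turns the native range $0\leq i_1<\cdots<i_k\leq n-1$ into $1\leq i_1<\cdots<i_k\leq n$ and each composite $\mathbbm{d}^{i_k}\cdots\mathbbm{d}^{i_1}$ into $d^{i_k}\cdots d^{i_1}$. Substituting both identifications produces exactly the asserted formula.

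The hard part, such as it is, is purely the index bookkeeping: one must keep the degree shift $\mathbb{P}_m=P_{m+1}$ and the relabeling $d^i=\mathbbm{d}^{i-1}$ mutually consistent so that the summation range emerges as $1\leq i_1<\cdots<i_k\leq n$ and the Brunnian subscript as $n-k$, rather than an off-by-one variant. The only hypothesis worth stating explicitly is the abelianness of each $L_q(\mathbb{P}_m)$, since this is what lets the iterated semidirect product of the general Decomposition Theorem collapse to the internal direct sum of Corollary~\ref{Corollary4.3}.
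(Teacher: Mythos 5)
Your proof is correct and follows exactly the paper's route: the paper likewise obtains the decomposition by applying Corollary~\ref{Corollary4.3} to $\mathcal{G}=L_q(\mathbb{P})$, using Lemma~\ref{lemma 3.4}, the identification $\mathcal{Z}_m(L_q(\mathbb{P}))=L^P_q(\Brun_{m+1})$ from Proposition~\ref{Proposition2}, and the relabeling $d^i=\mathbbm{d}^{i-1}$. Your write-up merely makes the index bookkeeping more explicit than the paper, which treats the statement as immediate once these identifications are set up.
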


\begin{cor}\label{corollary4.5}
There is a formula
$$
\mathrm{rank}(L_q(P_n))=\sum_{k=0}^{n-1} \binom{n}{k} \mathrm{rank}(L^P_q(\Brun_{n-k}))
$$
for each $n$ and $q$. \hfill $\Box$
\end{cor}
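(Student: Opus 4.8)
The plan is to read the rank formula directly off the direct-sum decomposition of the preceding Proposition, using additivity of rank together with the injectivity of the cofaces. Since $L(P_n)$ is of finite type, each homogeneous component $L_q(P_n)$ is a finitely generated free abelian group, and hence so is every summand in that decomposition; in particular each $L^P_q(\Brun_m)$ is free abelian of finite rank. Because rank is additive over a finite direct sum of such groups, the Proposition gives
$$
\mathrm{rank}(L_q(P_n)) = \sum_{\substack{1\leq i_1<\cdots<i_k\leq n\\ 0\leq k\leq n-1}} \mathrm{rank}\bigl(d^{i_k}d^{i_{k-1}}\cdots d^{i_1}(L^P_q(\Brun_{n-k}))\bigr).
$$

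The key step is to verify that each iterated coface $d^{i_k}\cdots d^{i_1}$ is injective, so that the associated summand has rank exactly $\mathrm{rank}(L^P_q(\Brun_{n-k}))$. This is immediate from the cosimplicial identity $d_i d^i = \id$ (relation (3) in the definition of a bi-$\Delta$-set, taken with $j=i$), which exhibits the face $d_i$ as a left inverse of the coface $d^i$; hence every $d^i$ is a monomorphism, and so is any composite of cofaces. Equivalently, this injectivity is already built into the semidirect-product structure of the Decomposition Theorem recorded in Corollary~\ref{Corollary4.3}, whose blocks are by construction isomorphic copies of the relevant $L^P_q(\Brun_{n-k})$.

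Finally I would group the summands according to the value of $k$. For each fixed $k$ with $0\leq k\leq n-1$, the admissible indices are the strictly increasing tuples $1\leq i_1<\cdots<i_k\leq n$, of which there are exactly $\binom{n}{k}$, and every such tuple contributes a summand of rank $\mathrm{rank}(L^P_q(\Brun_{n-k}))$. Summing over $k$ then yields
$$
\mathrm{rank}(L_q(P_n)) = \sum_{k=0}^{n-1}\binom{n}{k}\,\mathrm{rank}(L^P_q(\Brun_{n-k})),
$$
as claimed. I do not expect any genuine obstacle: the whole content of the statement is carried by the decomposition Proposition, and the only point requiring verification beyond elementary counting is the injectivity of the cofaces, which follows at once from the bi-$\Delta$ identities.
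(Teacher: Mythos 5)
Your proposal is correct and matches the paper's (implicit) argument: the paper states this corollary as an immediate consequence of the preceding decomposition of $L_q(P_n)$, exactly as you do, with the rank count reduced to the $\binom{n}{k}$ choices of increasing index tuples. Your additional verification that each coface composite is injective (via the identity $d_id^i=\id$ from the bi-$\Delta$ structure) is precisely the detail the paper leaves unstated, and it is handled correctly.
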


\begin{thm}
$$
\mathrm{rank}(L_q^P(\Brun_n))=\sum_{k=0}^{n-1}(-1)^k \binom{n}{k} \mathrm{rank}(L_q(P_{n-k}))
$$
for each $n$ and $q$, where $P_1=0$ and, for $m\geq2$,
$$
\mathrm{rank}(L_q(P_m))=\frac{1}{q}\sum_{k=1}^{m-1}\sum_{d|q}\mu(d)k^{q/d}
$$
with $\mu$ the M\"obis function.
\end{thm}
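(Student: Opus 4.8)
The statement splits into two independent parts, and the plan is to handle them separately. For the inversion formula, the plan is to read Corollary~\ref{corollary4.5} as a binomial transform and invert it. Writing $A_m=\mathrm{rank}(L_q(P_m))$ and $B_m=\mathrm{rank}(L^P_q(\Brun_m))$, and adopting the conventions $A_0=0$ (since $P_0$ is trivial) and $B_0=0$, Corollary~\ref{corollary4.5} reads
\[
A_n=\sum_{k=0}^{n-1}\binom{n}{k}B_{n-k}=\sum_{j=0}^{n}\binom{n}{j}B_j,
\]
where the last equality comes from reindexing $j=n-k$, using $\binom{n}{k}=\binom{n}{n-k}$, and appending the vanishing $j=0$ term. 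This is exactly the hypothesis of binomial inversion (equivalently, M\"obius inversion on the subset lattice), whose conclusion is
\[
B_n=\sum_{j=0}^{n}(-1)^{n-j}\binom{n}{j}A_j=\sum_{k=0}^{n-1}(-1)^{k}\binom{n}{k}A_{n-k},
\]
after reindexing $k=n-j$ and discarding the $j=0$ term, which vanishes because $A_0=0$. This yields the first displayed formula.

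For the closed form of $\mathrm{rank}(L_q(P_m))$, the plan is to exploit the fibration structure of the pure braid groups. Forgetting the last strand gives split short exact sequences
\[
1\to F_k\to P_{k+1}\to P_k\to 1,
\]
with $F_k$ free of rank $k$, exhibiting $P_m$ as an iterated almost-direct product of the free groups $F_1,\dots,F_{m-1}$, the monodromy acting trivially on the homology of each fibre. By the Falk--Randell splitting for fiber-type products, this forces an additive decomposition of the associated graded Lie algebra,
\[
L(P_m)\cong\bigoplus_{k=1}^{m-1}L(F_k),
\]
as graded abelian groups, so that $\mathrm{rank}(L_q(P_m))=\sum_{k=1}^{m-1}\mathrm{rank}(L_q(F_k))$ in each weight $q$. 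Feeding Witt's formula $\mathrm{rank}(L_q(F_k))=\tfrac1q\sum_{d\mid q}\mu(d)k^{q/d}$ into each summand and interchanging the two finite sums produces
\[
\mathrm{rank}(L_q(P_m))=\frac1q\sum_{k=1}^{m-1}\sum_{d\mid q}\mu(d)k^{q/d},
\]
as claimed; the degenerate factor $F_1=\Z$ contributes $L(F_1)$ concentrated in weight one, which is precisely what Witt's formula returns at $k=1$, and the empty sum at $m=1$ recovers $P_1=0$.

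The inversion step is purely formal, so the substantive content lies in the second part, and the main obstacle is justifying the additive splitting $L(P_m)\cong\bigoplus_k L(F_k)$: one must confirm that the forgetful fibrations are genuinely almost-direct products, i.e.\ that the Falk--Randell hypothesis (trivial action on $H_1$ of the fibre) holds for $P_m$, so that the lower central series of $P_m$ splits compatibly with those of the free fibres. Granting that, the remainder reduces to Witt's formula together with routine bookkeeping of binomial coefficients and the boundary conventions $A_0=B_0=0$ used to align the index shifts in the inversion.
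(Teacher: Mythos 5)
Your proposal is correct and follows essentially the same route as the paper: the paper likewise obtains $\mathrm{rank}(L_q(P_m))=\sum_{k=1}^{m-1}\mathrm{rank}(L_q(F_k))$ from the iterated semi-direct product decomposition $L(P_m)\cong L(P_{m-1})\oplus L(F_{m-1})$ (the Falk--Randell splitting you invoke) together with Witt's formula, and then inverts Corollary~\ref{corollary4.5} by writing the relations as an upper-triangular system of binomial coefficients and exhibiting the explicit alternating-sign inverse matrix, which is precisely the binomial inversion you carry out. Your version is marginally cleaner in making the boundary conventions $A_0=B_0=0$ explicit, but the substance is identical.
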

\begin{proof}
From the semi-direct product decomposition of Lie algebras,
$$
L(P_m)\cong L(P_{m-1})\oplus L(F_{m-1}),
$$
we have
$$
\mathrm{rank}(L_q(P_m))=\sum_{k=1}^{m-1}\mathrm{rank}(L_q(F_k))$$
for $m\geq 2$. Since $L(F_k)$ is the free Lie algebra on a set of $k$-elements,
$$
\mathrm{rank}(L_q(F_k))=\frac{1}{q}\sum_{d|q}\mu(d)k^{q/d}
$$
and so
$$
\mathrm{rank}(L_q(P_m))=\frac{1}{q}\sum_{k=1}^{m-1}\sum_{d|q}\mu(d)k^{q/d}.
$$
                                                                                                                                                                                                                                                                                                                                                                                                                                                                                                                                                                                                                                                                                                                                                                                                                                                                                                                                                                                                                                                                                                                                                                                                                                                                                                                                                                                                                                                                                          Now let $b_q(P_n)=\mathrm{rank}(L_q(P_n))$ and $b_q^P(\Brun_n)=\mathrm{rank}(L^P_q(\Brun_{n-k}))$.
By Corollary~\ref{corollary4.5}, we have
$$
\left(
\begin{array}{c}
b_q(P_n)\\
b_q(P_{n-1})\\
b_q(P_{n-2})\\
\vdots\\
b_q(P_1)\\
\end{array}
\right)=
\left(
\begin{array}{ccccc}
1&\binom{n}{1}&\binom{n}{2}&\cdots&\binom{n}{n-1}\\
0&1&\binom{n-1}{1}&\cdots&\binom{n-1}{n-2}\\
0&0&1&\cdots&\binom{n-2}{n-3}\\
\vdots&\vdots&\vdots&\cdots&\vdots\\
0&0&0&\cdots&1\\
\end{array}
\right)
\left(
\begin{array}{c}
b_q^P(\Brun_n)\\
b_q^P(\Brun_{n-1})\\
b_q^P(\Brun_{n-2})\\
\vdots\\
b_q^P(\Brun_1)\\
\end{array}
\right).
$$
Let
$A_n$ be the coefficient matrix of the above linear equations. Then
$$
A_n^{-1}=
\left(
\begin{array}{cccccc}
1&-\binom{n}{1}&\binom{n}{2}&-\binom{n}{3}&\cdots&(-1)^{n-1}\binom{n}{n-1}\\
0&1&-\binom{n-1}{1}&\binom{n-1}{2}&\cdots&(-1)^{n-2}\binom{n-1}{n-2}\\
0&0&1&-\binom{n-2}{1}&\cdots&(-1)^{n-3}\binom{n-2}{n-3}\\
0&0&0&1&\cdots&(-1)^{n-4}\binom{n-3}{n-4}\\
\vdots&\vdots&\vdots&\vdots&\cdots&\vdots\\
0&0&0&0&\cdots&1\\
\end{array}
\right)
$$
and hence the result.
\end{proof}

\bigskip

\section{Acknowledgments}
The first author is supported by NSFC (11201314) of China and the Excellent Young Scientist Fund of Shijiazhuang Tiedao University.
The second author is partially
supported by the Laboratory of Quantum Topology of Chelyabinsk State University
(Russian Federation government grant 14.Z50.31.0020) and
 RFBR grants  14-01-00014 and
13-01-92697-IND.
 The last author is
partially supported by the Singapore Ministry of Education research
grant (AcRF Tier 1 WBS No. R-146-000-190-112) and a grant (No.
11329101) of NSFC of China.

The second author is thankful to L.A.Bokut for the discussions on the subject of the paper.

\end{document}